\DeclareMathAlphabet{\mathpzc}{OT1}{pzc}{m}{it}
\newcommand{\R}{\mathbb{R}}
\newcommand{\Y}{\mathpzc{Y}}
\newcommand{\C}{\mathcal{C}}
\newcommand{\V}{\mathbb{V}}
\newcommand{\Wcal}{\mathcal{W}}
\newcommand{\Zcal}{\mathcal{Z}}
\newcommand{\Ucal}{\mathcal{U}}
\newcommand{\HL}{ \mbox{ \raisebox{7.2pt} {\tiny$\circ$} \kern-10.7pt} {H_L^1} }
\newcommand{\HLn}{{\mbox{\,\raisebox{4.7pt} {\tiny$\circ$} \kern-9.3pt}{H}^{1}_{L}  }}
\newcommand{\ue}{\mathscr{U}}
\DeclareMathOperator*{\tr}{tr_\Omega}
\newcommand{\Hsd}{\mathbb{H}^{-s}(\Omega)}
\newcommand{\calLs}{\mathcal{L}^s}
\newcommand{\GRAD}{\nabla}
\newcommand{\DIV}{\textrm{div}}
\newcommand{\diff}{\, \mbox{\rm d}}
\newcommand{\ie}{i.e.,\@\xspace}
\newcommand{\Hs}{\mathbb{H}^s(\Omega)}
\newcommand{\GL}{{\textup{\textsf{GL}}}}
\newcommand{\calG}{{\mathcal{G}}}
\newtheorem{remark}[theorem]{Remark}
\numberwithin{equation}{section}
\newcommand{\calL}{{\mathcal L}}
\DeclareMathOperator*{\esssup}{esssup}
\title{Regularity of solutions to space--time fractional wave equations: a PDE approach\thanks{EO is partially supported by CONICYT through FONDECYT project 3160201. AJS is partially supported by NSF grant DMS-1720213.}}
\author{Enrique Ot\'arola\thanks{Departamento de Matem\'atica, Universidad T\'ecnica Federico Santa Mar\'ia, Valpara\'iso, Chile. \texttt{enrique.otarola@usm.cl}.}
\and
Abner J.~Salgado\thanks{Department of Mathematics, University of Tennessee, Knoxville, TN 37996, USA.
\texttt{asalgad1@utk.edu}}}
\date{Draft version of \today.}
\begin{document}

\maketitle
\begin{abstract}
We consider an evolution equation involving the fractional powers, of order $s \in (0,1)$, of a symmetric and uniformly elliptic second order operator and Caputo fractional time derivative of order $\gamma \in (1,2]$. Since it has been shown useful for the design of numerical techniques for related problems, we also consider a quasi--stationary elliptic problem that comes from the realization of the spatial fractional diffusion as the Dirichlet-to-Neumann map for a nonuniformly elliptic problem posed on a semi--infinite cylinder.  We provide existence and uniqueness results together with energy estimates for both problems. In addition, we derive regularity estimates both in time and space; the time--regularity results show that the usual assumptions made in the numerical analysis literature are problematic.
\end{abstract}

\begin{keywords}
fractional derivatives and integrals, Caputo fractional derivative, fractional diffusion, space--time fractional wave equation, well--posedness, regularity estimates, weighted Sobolev spaces.
\end{keywords}

\begin{AMS}
26A33,  
35B65,  
35R11.  
\end{AMS}

\section{Introduction}
\label{sec:introduccion}

The purpose of this work is to derive regularity estimates for the solution to an initial boundary value problem for a space--time fractional wave equation. To make matters precise, let $\Omega$ be an open and bounded domain in $\R^n$ ($n\ge1$) with boundary $\partial\Omega$. Given $s \in (0,1)$, $\gamma \in (1,2]$, a forcing function $f$, and initial data $g$ and $h$, we seek $u$ such that
\begin{equation}
\label{eq:fractional_wave}
\begin{dcases}
  \partial^{\gamma}_t u + \mathcal{L}^s u  = f \ \text{ in } \Omega\times(0,T),
  \\
  u(0)  = g \ \text{ in } \Omega, \quad \partial_t u(0) = h \ \text{ in } \Omega.
\end{dcases}
\end{equation}
Here $\mathcal{L}$ denotes the linear, self--adjoint, second order, differential operator 
\begin{equation*}
\label{second_order}
 \mathcal{L} w = - \DIV_{x'} (A \nabla_{x'} w ) + c w,
\end{equation*}
supplemented with homogeneous Dirichlet boundary conditions; $0 \leq c \in L^\infty(\Omega)$ and $A \in C^{0,1}(\Omega,\GL(n,\R))$ is symmetric and uniformly positive definite. By $\calLs$, with $s \in (0,1)$, we denote the \emph{spectral} fractional powers of the operator $\calL$.

The fractional derivative in time $\partial^{\gamma}_t$ for $\gamma \in (1,2)$ is understood as \textit{the left--sided Caputo fractional derivative of order $\gamma$} with respect to $t$, which is defined by
\begin{equation}
\label{caputo}
\partial^{\gamma}_t u (t) := \frac{1}{\Gamma(2-\gamma)} \int_{0}^t \frac{1}{(t-r)^{\gamma - 1}} \frac{\diff^2 u(r)}{\diff r^2} \diff r,
\end{equation}
where $\Gamma$ is the Gamma function. For $\gamma = 2$, we consider the usual time derivative $\partial^2_{t}$.

The main source of difficulty in the analysis of \eqref{eq:fractional_wave} and in the design of efficient solution techniques is the nonlocality of both the fractional time derivative and the fractional space operator \cite{CS:07,fractional_book,Landkof,Samko,MR2270163}. When $\mathcal{L} = -\Delta$ and $\Omega = \R^n$, \ie in the case of the Laplacian in the whole space, Caffarelli and Silvestre \cite{CS:07} have overcome this difficulty by localizing $\mathcal{L}^s$ as the  Dirichlet-to-Neumann map for an extension problem to the upper half--space $\R_{+}^{n+1}$. The extension corresponds to a nonuniformly elliptic PDE. This important result was later extended in \cite{CT:10} and \cite{ST:10} to bounded domains $\Omega$ and more general operators, thereby obtaining an extension problem posed on the semi--infinite cylinder $\C:= \Omega \times (0,\infty)$; see also \cite{CDDS:11}. 
We shall use the Caffarelli--Silvestre extension result to rewrite our problem \eqref{eq:fractional_wave} as the following quasi--stationary elliptic problem with a dynamic boundary condition \cite{MR2600998,MR2737788,MR2954615,MR3192423}:
\begin{equation}
\label{eq:wave_alpha_extension}
\begin{dcases}
-\DIV \left( y^{\alpha} \mathbf{A} \nabla \ue \right) + y^{\alpha} c\ue = 0 \textrm{ in } \C \times(0,T), & \ue = 0 \textrm{ on }\partial_L \C  \times (0,T),\\
d_s \partial_{t}^{\gamma} \ue + \partial_{\nu}^{\alpha} \ue  = d_s f \textrm{ on } (\Omega \times \{ 0\}) \times (0,T),
\end{dcases}
\end{equation}
with the initial conditions
\begin{equation}
\label{eq:initial_cond}
 \ue = g \textrm{ on } \Omega \times \{ 0\}, ~t=0, \quad \partial_t \ue = h \textrm{ on } \Omega \times \{ 0\}, ~t=0,
\end{equation}
where $\partial_L \C= \partial \Omega \times [0,\infty)$ corresponds to the lateral boundary of $\C$, $\alpha =1-2s \in (-1,1)$, $d_s=2^\alpha \Gamma(1-s)/\Gamma(s)$ and the conormal exterior derivative of $\ue$ at $\Omega \times \{ 0 \}$ is
\begin{equation}
\label{eq:conormal_derivative}
\partial_{\nu}^{\alpha} \ue = -\lim_{y \rightarrow 0^+} y^\alpha \ue_y;
\end{equation}
the limit must be understood in the sense of distributions \cite{CS:07,CDDS:11,ST:10}. We will call $y$ the \emph{extended variable} and the dimension $n+1$ in $\R_+^{n+1}$ the \emph{extended dimension} of problem \eqref{eq:wave_alpha_extension}--\eqref{eq:initial_cond}. Finally, $\mathbf{A} =  \diag \{A,1\}  \in C^{0,1}(\C,\GL(n+1,\R))$. With the solution $\ue$ to the extension problem \eqref{eq:wave_alpha_extension}--\eqref{eq:initial_cond} at hand, we can find the solution to \eqref{eq:fractional_wave} via \cite{CT:10,CS:07,CDDS:11,ST:10}:
\[
 u = \ue|_{y=0}.
\]

To the best of the authors' knowledge this is the first work that analyzes problem \eqref{eq:fractional_wave} and its extended version \eqref{eq:wave_alpha_extension}: we provide existence and uniqueness results and derive regularity estimates both in time and space. Concerning spatial regularity of the extended problem \eqref{eq:wave_alpha_extension}, we derive such estimates in weighted Sobolev spaces. In particular, we establish analytic regularity with respect to the extended variable $y \in (0,\infty)$. We prove that $\ue$ belongs to countably normed, power--exponentially weighted Bochner spaces of analytic functions with respect to $y$, taking values in Sobolev spaces in $\Omega$. Our main motivation for deriving such regularity results is the fact that any rigorous study of a numerical scheme to approximate the solution to \eqref{eq:fractional_wave} via the resolution of problem \eqref{eq:wave_alpha_extension} must be concerned with the regularity of its solution. In fact, as it is well known, smoothness and rate of approximation go hand in hand. This is exactly the content of direct and converse theorems in approximation theory \cite{MR1217081,MR0145254}. An instance of this is provided in \cite{BMNOSS:17,MFSV:17} where, in the case of an elliptic equation involving the spectral fractional Laplacian, the analytic regularity of the solution to the extended problem with respect to the extended direction has been shown essential to provide efficient solution techniques.

In the literature, several numerical techniques have been proposed to approximate the solution to problems involving the Caputo fractional derivative of order $\gamma \in (1,2)$. To the best of our knowledge the first work that proposes a scheme based on finite differences is \cite{MR2200938}. In this work, the authors prove that the consistency error of such scheme is $\mathcal{O}(\tau^{3-\gamma})$, where $\tau$ denotes the time step. However, this error estimate requires a rather strong regularity assumption, namely that \emph{the third time derivative of $u$ is continuous on $[0,T]$}. This assumption is problematic and it has been largely ignored in the literature \cite{MR3549087,MR2651618,MR2870026,MR2970754}. Since $\gamma \in (1,2)$, the second and higher order derivatives of the solution $u$ of \eqref{eq:fractional_wave} with respect to $t$ are unbounded as $t \downarrow 0$. In this work, we examine the singular behavior of $\partial_t^2 u$ and $\partial_{t}^3 u$ when $t \downarrow 0$ and derive realistic time--regularity estimates for $u$ and $\ue$ in Theorems \ref{thm:time_regularity} and \ref{thm:time_regularity_extension}, respectively. These refined regularity estimates are  of importance in the analysis of time discretization schemes for problems \eqref{eq:fractional_wave} and \eqref{eq:wave_alpha_extension}.

The outline of this paper is as follows. In section \ref{sec:Prelim} we introduce some terminology used throughout this work. We recall the spectral definition of fractional powers of elliptic operators in section \ref{sub:fractional_L} and, in section \ref{sub:CaffarelliSilvestre}, state the essential result by Caffarelli and Silvestre about their localization. In sections \ref{sub:fractional_derivatives} and \ref{sub:ML}, we  introduce some elements of fractional calculus that will be important in the analysis that follows. Upon introducing suitable definitions of weak solutions, we derive the well--posedness of problems \eqref{eq:fractional_wave} and \eqref{eq:wave_alpha_extension}--\eqref{eq:initial_cond} in sections \ref{sub:existunique} and \ref{sub:extended}, respectively. In section \ref{sub:time_regularity} we derive time regularity results for problems \eqref{eq:fractional_wave} and \eqref{eq:wave_alpha_extension}--\eqref{eq:initial_cond}, while, in section \ref{sub:space_regularity} we study spatial regularity properties for the solution to the extended problem \eqref{eq:wave_alpha_extension}--\eqref{eq:initial_cond}. Space-time regularity of the solution to \eqref{eq:wave_alpha_extension}--\eqref{eq:initial_cond} is discussed in section \ref{sub:spacetime}. We conclude the discussion with section \ref{sub:application} where we sketch how the obtained regularity results can be applied to the derivation of error estimates for fully discrete schemes for \eqref{eq:wave_alpha_extension}--\eqref{eq:initial_cond}.

\section{Notation and preliminaries}
\label{sec:Prelim}

Throughout this work $\Omega$ is an open, bounded, and convex subset of $\R^n$ ($n\geq1$) with boundary $\partial\Omega$. We will follow the notation of \cite{NOS,Otarola} and define the semi--infinite cylinder $\C := \ \Omega \times (0,\infty)$ and its lateral boundary $\partial_L \C  := \partial \Omega \times [0,\infty)$. For $\Y>0$, we define the truncated cylinder with base $\Omega$ and height $\Y$ as $\C_\Y := \Omega \times (0, \Y)$; its lateral boundary is denoted by $\partial_L \C_{\Y}  = \partial \Omega \times (0,\Y)$. Since we will be dealing with objects defined in $\R^n$ and $\R^{n+1}$, it will be convenient to distinguish the extended $(n+1)$--dimension: if $x\in \R^{n+1}$, we write
$
x =  (x',y),
$
with $x' \in \R^n$ and $y\in\R$. 

Whenever $X$ is a normed space, $X'$ denotes its dual and $\|\cdot\|_{X}$ its norm. If, in addition, $Y$ is a normed space, we write $X \hookrightarrow Y$ to indicate continuous embedding. We will follow standard notation for function spaces \cite{Adams,Tartar}. For an open set $D \subset \R^d$ ($d \geq 1$), if $\omega$ is a weight and $p \in (1,\infty)$ we denote the Lebesgue space of $p$-integrable functions with respect to the measure $\omega \diff x$ by $L^p(\omega,D)$ \cite{HKM,Kufner80,Turesson}. Similar notation will be used for weighted Sobolev spaces. If $T >0$ and $\phi: D \times(0,T) \to \R$, we consider $\phi$ as a function of $t$ with values in a Banach space $X$,
$
 \phi:(0,T) \ni t \mapsto  \phi(t) \equiv \phi(\cdot,t) \in X
$.
For $1 \leq p \leq \infty$, $L^p( 0,T; X)$ is the space of $X$-valued functions whose norm in $X$ is in $L^p(0,T)$. This is a Banach space for the norm
\[
  \| \phi \|_{L^p( 0,T;X)} = \left( \int_0^T \| \phi(t) \|^p_X \diff t \right)^{\hspace{-0.1cm}\frac{1}{p}} 
  , \quad 1 \leq p < \infty, \quad
  \| \phi \|_{L^\infty( 0,T;X)} = \esssup_{t \in (0,T)} \| \phi(t) \|_X.
\]

The relation $a \lesssim b$ means $a \leq Cb$, with a constant $C$ that neither depends on $a$ or $b$. The value of $C$ might change at each occurrence.

Finally, since we assume $\Omega$ to be convex, in what follows we will make use, without explicit mention, of the following regularity result \cite{Grisvard}:
\begin{equation}
\label{eq:Omega_regular}
\| w \|_{H^2(\Omega)} \lesssim \| \mathcal{L} w \|_{L^2(\Omega)} \quad \forall w \in H^2(\Omega) \cap H^1_0(\Omega).
\end{equation}


\subsection{Fractional powers of second order elliptic operators}
\label{sub:fractional_L}

We adopt the \emph{spectral} definition for the fractional powers of the operator $\mathcal{L}$. This is, to define $\mathcal{L}^s$, we invoke spectral theory for the operator $\calL$ \cite{BS}. The eigenvalue problem: Find $(\lambda,\varphi) \in \R \times H_0^1(\Omega) \setminus \{ 0\}$ such that
\begin{equation}
\label{eq:eigenpairs}
    \mathcal{L} \varphi = \lambda \varphi  \text{ in } \Omega,
    \qquad
    \varphi = 0 \text{ on } \partial\Omega,
\end{equation}
has a countable collection of solutions $\{ \lambda_{\ell}, \varphi_{\ell} \}_{\ell\in \mathbb N} \subset \R_+ \times H_0^1(\Omega)$ such that $\{\varphi_{\ell} \}_{\ell=1}^{\infty}$ is an orthonormal basis of $L^2(\Omega)$ and an orthogonal basis of $H_0^1(\Omega)$, for the inner product induced by $\calL$. With these eigenpairs at hand, we introduce, for $s \geq 0$, the space
\begin{equation}
\label{def:Hs}
  \Hs = \left\{ w = \sum_{\ell=1}^\infty w_\ell \varphi_\ell: \| w \|^2_{\Hs}:= \sum_{\ell=1}^\infty \lambda_\ell^s |w_\ell|^2 < \infty \right\},
\end{equation}
and denote by $\Hsd$ the dual space of $\Hs$. The duality pairing between the aforementioned spaces will be denoted by $\langle \cdot, \cdot \rangle$. 

We notice that, if  $s \in (0,\tfrac12)$, $\Hs = H^s(\Omega) = H_0^s(\Omega)$, while, for $s \in (\tfrac12,1)$, $\Hs$ can be characterized by \cite{Lions,McLean,Tartar}
\[
  \Hs = \left\{ w \in H^s(\Omega): w = 0 \text{ on } \partial\Omega \right\}.
\]
If $s = \frac{1}{2}$, we have that $\mathbb{H}^{\frac{1}{2}}(\Omega)$ is the so--called Lions--Magenes space $H_{00}^{\frac{1}{2}}(\Omega)$ \cite{Lions,Tartar}. If $s\in(1,2]$, owing to \eqref{eq:Omega_regular}, we have that $\Hs = H^s(\Omega)\cap H^1_0(\Omega)$ \cite{ShinChan}.

The fractional powers of the operator $\mathcal L$ are thus defined by
\begin{equation}
  \label{def:second_frac}
 \mathcal{L}^s: \Hs \to \Hsd, \quad  \mathcal{L}^s w  := \sum_{\ell=1}^\infty \lambda_\ell^{s} w_\ell \varphi_\ell,  \quad s \in (0,1).
\end{equation} 

\subsection{Weighted Sobolev spaces}
\label{sub:CaffarelliSilvestre}
Both extensions, the one by Caffarelli and Silvestre \cite{CS:07} and the ones in \cite{CT:10,CDDS:11,ST:10} for $\Omega$ bounded and general elliptic operators require us to deal with a local but nonuniformly elliptic problem. To provide an analysis it is thus suitable to define the weighted Sobolev space
\begin{equation}
  \label{HL10}
  \HL(y^{\alpha},\C) = \left\{ w \in H^1(y^\alpha,\C): w = 0 \textrm{ on } \partial_L \C\right\}.
\end{equation}
Since $\alpha \in (-1,1)$, $|y|^\alpha$ belongs to the Muckenhoupt class $A_2$ \cite{MR1800316,Muckenhoupt,Turesson}. We thus have the following important consequences: $H^1(y^{\alpha},\C)$ is a Hilbert space and $C^{\infty}(\Omega) \cap H^1(y^{\alpha},\C)$ is dense in $H^1(|y|^{\alpha},\C)$ (cf.~\cite[Proposition 2.1.2, Corollary 2.1.6]{Turesson}, \cite{KO84} and \cite[Theorem~1]{GU}).

As \cite[inequality (2.21)]{NOS} shows, the following \emph{weighted Poincar\'e inequality} holds:
\begin{equation}
\label{Poincare_ineq}
\| w \|_{L^2(y^{\alpha},\C)} \lesssim \| \nabla w \|_{L^2(y^{\alpha},\C)} \quad \forall w \in \HL(y^{\alpha},\C).
\end{equation}
Thus, $\| \nabla w \|_{L^2(y^{\alpha},\C)}$ is equivalent to the norm in $\HL(y^{\alpha},\C)$. For $w \in H^1( y^{\alpha},\C)$, $\tr w$ denotes its trace onto $\Omega \times \{ 0 \}$. We recall that, for $\alpha = 1-2s$, \cite[Prop. 2.5]{NOS} yields
\begin{equation}
\label{Trace_estimate}
\tr \HL(y^\alpha,\C) = \Hs,
\qquad
  \|\tr w\|_{\Hs} \lesssim \| w \|_{\HLn(y^\alpha,\C)}.
\end{equation}

The seminal work of Caffarelli and Silvestre \cite{CS:07} and its extensions to bounded domains \cite{CT:10,CDDS:11,ST:10} showed that the operator $\mathcal{L}^s$ can be realized as the Dirichlet-to-Neumann map for a nonuniformly elliptic boundary value problem. Namely, if $\Ucal \in \HL(y^{\alpha},\C)$ solves
\begin{equation}
\label{eq:extension}
    -\DIV\left( y^\alpha \mathbf{A} \GRAD \Ucal \right) + c y^\alpha  \Ucal= 0  \text{ in } \C, \quad
    \Ucal= 0  \text{ on } \partial_L \C, \quad
    \partial_\nu^\alpha \Ucal = d_s f  \text{ on } \Omega \times \{0\},
\end{equation}
where $\alpha = 1-2s$, $\partial_\nu^\alpha \Ucal = -\lim_{y\downarrow 0} y^\alpha \Ucal_y$ and $d_s = 2^\alpha \Gamma(1-s)/\Gamma(s)$ is a normalization constant, then $\mathfrak{u} = \tr \Ucal \in \Hs$ solves
\begin{equation}
\label{eq:fraclap}
\mathcal{L}^s \mathfrak{u} = f.
\end{equation}

\subsection{Fractional derivatives and integrals}
\label{sub:fractional_derivatives}
The left--sided Caputo fractional derivative of order $\gamma \in (1,2)$ is defined as in \eqref{caputo}.

Given a function $g \in L^1(0,T)$, the left Riemann--Liouville fractional integral $I^{\sigma}[g]$ of order $\sigma>0$ is defined by \cite[formula (2.1.1)]{fractional_book}, \cite[formula (2.17)]{Samko}
\begin{equation}
\label{fractional_integral}
I^{\sigma}[g](t) = \frac{1}{\Gamma(\sigma)} \int_{0}^t \frac{g(r)}{(t-r)^{1-\sigma}} \diff r.
\end{equation}

Young's inequality for convolutions immediately yields the following result.
\begin{lemma}[continuity]
\label{le:continuity}
If $g \in L^2(0,T)$ and $\phi \in L^1(0,T)$, then the operator
\[
 g \mapsto \Phi, \qquad \Phi(t) = \phi \star g (t) = \int_0^t \phi(t-r) g(r) \diff r
\]
is continuous from $L^2(0,T)$ into itself and 
$
 \| \Phi \|_{L^2(0,T)} \leq \| \phi \|_{L^1(0,T)} \| g \|_{L^2(0,T)}.
$
\end{lemma}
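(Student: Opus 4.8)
The plan is to reduce the finite (Volterra-type) convolution to a convolution on the whole real line and then invoke Young's inequality, exactly as the sentence preceding the statement suggests. First I would extend both functions by zero outside the interval: set $\tilde\phi,\tilde g:\R\to\R$ equal to $\phi,g$ on $(0,T)$ and equal to $0$ elsewhere. Since $\phi \in L^1(0,T)$ and $g \in L^2(0,T)$, these extensions satisfy $\|\tilde\phi\|_{L^1(\R)} = \|\phi\|_{L^1(0,T)}$ and $\|\tilde g\|_{L^2(\R)} = \|g\|_{L^2(0,T)}$.

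Next I would observe that the truncated integral defining $\Phi$ coincides, for a.e.\ $t \in (0,T)$, with the full-line convolution $(\tilde\phi \star \tilde g)(t) = \int_\R \tilde\phi(t-r)\tilde g(r)\diff r$. Indeed, the integrand is nonzero only when simultaneously $r \in (0,T)$ and $t-r \in (0,T)$; for $t \in (0,T)$ this forces $0 < r < t$, so the whole-line convolution collapses to $\int_0^t \phi(t-r)g(r)\diff r = \Phi(t)$.

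The estimate then follows from Young's inequality for convolutions with exponents $p=1$, $q=2$, $r=2$, which satisfy the admissibility relation $\tfrac1p + \tfrac1q = 1 + \tfrac1r$: one has $\|\tilde\phi \star \tilde g\|_{L^2(\R)} \leq \|\tilde\phi\|_{L^1(\R)}\|\tilde g\|_{L^2(\R)}$. If one prefers a self-contained derivation rather than citing Young directly, substitute $s = t-r$ and apply Minkowski's integral inequality to the Banach-space-valued integral $\int_\R \tilde\phi(s)\,\tilde g(\cdot - s)\diff s$, using translation invariance of the $L^2(\R)$ norm; this produces the identical bound. Restricting back to $(0,T)$ can only decrease the $L^2$ norm, whence $\|\Phi\|_{L^2(0,T)} \leq \|\phi\|_{L^1(0,T)}\|g\|_{L^2(0,T)}$. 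Finally, since $g \mapsto \Phi$ is linear, this boundedness is precisely continuity from $L^2(0,T)$ into itself.

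The argument is essentially routine; the only points that warrant care are the Fubini--Tonelli justification guaranteeing that $\Phi$ is well defined a.e.\ and measurable, and the support bookkeeping identifying the finite convolution with the whole-line one. Neither constitutes a genuine obstacle, which is exactly why the result can fairly be presented as an immediate consequence of Young's inequality.
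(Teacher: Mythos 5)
Your argument is correct and is exactly the route the paper takes: the lemma is presented there as an immediate consequence of Young's inequality for convolutions, and your zero-extension plus support bookkeeping simply fills in the standard details behind that one-line justification. No discrepancy to report.
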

\begin{corollary}[continuity of $I^{\sigma}$]
\label{co:continuity}
For any $\sigma > 0$, the left Riemann-Liouville fractional integral $I^{\sigma} g$ is continuous from
$L^2(0,T)$ into itself and
\[\|I^{\sigma} [g]\|_{L^2(0,T)}\le \frac{T^\sigma}{\Gamma(\sigma+1)}
\|g\|_{L^2(0,T)} \quad\forall g\in L^2(0,T).
\]
\end{corollary}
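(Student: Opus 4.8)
The plan is to recognize the left Riemann--Liouville fractional integral \eqref{fractional_integral} as a convolution against a locally integrable kernel, so that the result becomes a direct corollary of Lemma \ref{le:continuity}. Concretely, I would set
\[
\phi(t) = \frac{t^{\sigma - 1}}{\Gamma(\sigma)}, \qquad t \in (0,T).
\]
With this choice the definition \eqref{fractional_integral} reads verbatim as the convolution $I^{\sigma}[g](t) = (\phi \star g)(t) = \int_0^t \phi(t-r) g(r) \diff r$ that appears in the statement of Lemma \ref{le:continuity}, since $\phi(t-r) = (t-r)^{\sigma-1}/\Gamma(\sigma)$.

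The only hypothesis of Lemma \ref{le:continuity} that is not immediate is that $\phi \in L^1(0,T)$, and this is where the standing assumption $\sigma > 0$ is used: it guarantees $\sigma - 1 > -1$, so the algebraic singularity of $t^{\sigma-1}$ at the origin is integrable. A direct computation then yields the sharp constant,
\[
\|\phi\|_{L^1(0,T)} = \frac{1}{\Gamma(\sigma)} \int_0^T t^{\sigma-1} \diff t = \frac{1}{\Gamma(\sigma)}\cdot \frac{T^\sigma}{\sigma} = \frac{T^\sigma}{\Gamma(\sigma+1)},
\]
where the last equality uses the functional equation $\Gamma(\sigma+1) = \sigma\,\Gamma(\sigma)$.

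With $g \in L^2(0,T)$ and $\phi \in L^1(0,T)$ in hand, Lemma \ref{le:continuity} gives at once that $I^{\sigma}$ maps $L^2(0,T)$ continuously into itself together with the estimate $\|I^{\sigma}[g]\|_{L^2(0,T)} \le \|\phi\|_{L^1(0,T)}\|g\|_{L^2(0,T)}$, which, after substituting the value of $\|\phi\|_{L^1(0,T)}$ computed above, is exactly the claimed bound. I do not anticipate any genuine obstacle: the entire argument reduces to the correct identification of the convolution kernel and the verification of its integrability near $t=0$, the latter being the only place where the restriction $\sigma>0$ plays a role.
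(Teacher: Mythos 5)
Your proof is correct and is precisely the argument the paper intends: the corollary is stated as an immediate consequence of Lemma \ref{le:continuity} applied to the kernel $\phi(t)=t^{\sigma-1}/\Gamma(\sigma)$, whose $L^1(0,T)$-norm is $T^\sigma/\Gamma(\sigma+1)$. Nothing is missing.
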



\subsection{The Mittag-Leffler function} 
\label{sub:ML}

For $\gamma > 0$ and $\mu \in \mathbb{C}$, we define the two--parameter Mittag--Leffler function $E_{\gamma,\mu}$ as \cite{MR3244285,fractional_book,Podlubny} 
\begin{equation}
\label{def:ML}
 E_{\gamma,\mu}(z):= \sum_{k=0}^{\infty} \frac{z^k}{\Gamma( \gamma k + \mu )}, \quad z \in \mathbb{C},
\end{equation}
which generalizes the classical Mittag--Leffler function $E_{\gamma}(z):= E_{\gamma,1}(z)$. It can be shown that $E_{\gamma,\mu}(z)$ is an entire function of $z \in \mathbb{C}$. In what follows the following members of the family $\{ E_{\gamma,\mu}: \gamma > 0, \mu \in \R\}$ will be of importance: $E_{\gamma,1}$, $E_{\gamma,2}$, and $E_{\gamma,\gamma}$; they will allow us to derive solution representation formulas for \eqref{eq:fractional_wave} and \eqref{eq:wave_alpha_extension}--\eqref{eq:initial_cond} in the case that $\gamma \in (1,2)$. 

For $\lambda,\gamma,t \in \R^+$, we have \cite[Lemma 2.23]{fractional_book}
\begin{equation}
\label{dEbeta1}
\partial_t^{\gamma} E_{\gamma,1}(-\lambda t^{\gamma}) = -\lambda E_{\gamma,1}(-\lambda t^{\gamma}).
\end{equation}
If $\gamma \in (0,2)$, $\mu \in \R$, $\pi \gamma/2 < \delta < \min\{ \pi, \pi \gamma \}$ and $\delta \leq |\arg(z) | \leq \pi$, then 
\begin{equation}
\label{ML_estimate}
|E_{\gamma,\mu}(z)| \lesssim  (1 + |z|)^{-1},
\end{equation}
where the hidden constant depends only on $\gamma$, $\mu$ and $\delta$; see \cite[Section 1.8]{fractional_book} and \cite[Theorem 1.6]{Podlubny}.

For $q \in \mathbb{N}$ and $\gamma, \mu - q \in \R^{+}$ the following differentiation formula follows from \cite[formula (1.83)]{Podlubny}
\begin{equation}
\label{eq:derivative_t_ML}
\left(\frac{\diff}{\diff t}\right)^q \left( t^{\mu-1}E_{\gamma,\mu}(t^{\gamma}) \right) = t^{\mu-q-1} E_{\gamma,\mu - q}(t^{\gamma}).
\end{equation}
As an application of the previous formula we record, for future reference, the following particular cases: If $t,\gamma,\lambda \in \mathbb{R}^+$, then
\begin{equation}
\label{eq:G_derivatives2}
\frac{\diff}{\diff t} \left( t E_{\gamma,2}(-\lambda t^{\gamma}) \right)=  E_{\gamma,1}(-\lambda t^{\gamma}),
\end{equation}
and
\begin{equation}
\label{eq:G_derivatives3}
  \frac{\diff}{\diff t} \left( t^{\gamma-1} E_{\gamma,\gamma}(-\lambda t^{\gamma}) \right)=  t^{\gamma-2} E_{\gamma,\gamma-1}(-\lambda t^{\gamma});
\end{equation}
see also \cite[Lemma 2.2]{MR3613323} and \cite[Lemma 3.2]{Sakayama}.

We conclude this section with the following formula that follows from \cite[Lemma 3.2]{Sakayama} and \cite[Lemma 2.2]{MR3613323}: If $t,\gamma, \lambda \in \R^{+}$ and $q$ denotes a positive integer, then
\begin{equation}
\label{eq:G_derivatives}
\left(\frac{\diff}{\diff t}\right)^q  E_{\gamma,1}(-\lambda t^{\gamma}) = - \lambda t^{\gamma - q} E_{\gamma,\gamma-q+1}(-\lambda t^{\gamma}).
\end{equation}

\section{Well--posedness and energy estimates} 
\label{sec:wellposedness}
In this section we will study the existence and uniqueness of weak solutions to problems \eqref{eq:fractional_wave} and \eqref{eq:wave_alpha_extension}--\eqref{eq:initial_cond} and derive energy estimates. We will begin with the analysis of the fractional wave equation \eqref{eq:fractional_wave} and distinguish two cases: $\gamma = 2$ and $\gamma \in (1,2)$; the first one being considerable simpler.

\subsection{The fractional wave equation}
\label{sub:existunique}

\subsubsection{Case $\gamma=2$} We assume that the data of problem \eqref{eq:fractional_wave} is such that $f \in L^2(0,T;L^2(\Omega))$, $g \in \Hs$ and $h \in L^2(\Omega)$.

\begin{definition}[weak solution for $\gamma = 2$]
\label{def:weak_2}
We call $u \in L^2(0,T;\Hs)$, with $\partial_t u \in L^2(0,T;L^2(\Omega))$ and $\partial_{t}^2 u \in L^2(0,T;\Hsd)$, a weak solution of problem \eqref{eq:fractional_wave} if $u(0) = g$, $\partial_t u(0) = h$ and a.e. $t \in (0,T)$,
\[
  \langle \partial_t^2 u , v \rangle + \langle \mathcal{L}^s u , v \rangle
  = \langle f , v \rangle \quad \forall v \in \Hs,
\]
where $\langle \cdot , \cdot \rangle$ denotes the the duality pairing between $\Hs$ and $\Hsd$.

\end{definition}
\begin{remark}[initial conditions $\gamma=2$]\rm
Since a weak solution $u$ of \eqref{eq:fractional_wave} satisfies that $u \in L^2(0,T;\Hs)$, $\partial_t u \in L^2(0,T;L^2(\Omega))$ and $\partial_{t}^2 u \in L^2(0,T;\Hsd)$, an application of \cite[Lemma 7.3]{MR3014456} implies that $u \in C([0,T]; L^2(\Omega))$ and that $\partial_t u \in C([0,T]; \Hsd)$. Consequently, the initial conditions involved in Definition \ref{def:weak_2} are appropriately defined.
\label{rem:initial_cond_2}
\end{remark}

Existence and uniqueness of a weak solution, in the sense of Definition \ref{def:weak_2}, to problem \eqref{eq:fractional_wave}, together with energy estimates are obtained by slightly modifying the arguments, based on a Galerkin technique, of \cite{Evans,Lions,MR3014456}. For brevity, we present the following result and leave the details to the reader. We only mention, since this will be used below, that the solution has the representation $u(x',t) = \sum_{k\geq 1} u_k(t) \varphi_k(x')$, where the coefficients $u_k$ are given by
\begin{equation}
\label{eq:var_parameters}
  u_k(t) = g_k \cos(\lambda_k^{s/2} t) + \frac{h_k}{\lambda_k^{s/2}} \sin(\lambda_k^{s/2}t) 
  + \frac1{\lambda_k^{s/2}} \int_0^t f_k(r) \sin(\lambda_k^{s/2}(t-r)) \diff r.
\end{equation}

\begin{theorem}[well--posedness for $\gamma = 2$] 
\label{thm:wp_2}
Given $s \in (0,1)$, $\gamma = 2$, $f \in L^2(0,T;L^2(\Omega))$, $g \in \Hs$ and $h \in L^2(\Omega)$, problem \eqref{eq:fractional_wave} has a unique weak solution. In addition, 
\end{theorem}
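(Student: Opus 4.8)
The plan is to follow the classical Galerkin strategy for second order hyperbolic evolution equations, exploiting the fact that the eigenfunctions $\{\varphi_\ell\}_{\ell\in\mathbb{N}}$ diagonalize $\calLs$. First I would introduce the finite--dimensional spaces $V_m = \mathrm{span}\{\varphi_1,\dots,\varphi_m\}$ and seek $u_m(t) = \sum_{k=1}^m u_k(t)\varphi_k$ solving the projected problem $\langle \partial_t^2 u_m, v\rangle + \langle \calLs u_m, v\rangle = \langle f, v\rangle$ for all $v\in V_m$, with $u_m(0)$ and $\partial_t u_m(0)$ the $L^2(\Omega)$--projections of $g$ and $h$ onto $V_m$. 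Since $\langle \calLs \varphi_k, \varphi_j\rangle = \lambda_k^s \delta_{kj}$, this decouples into the scalar linear ODEs $u_k'' + \lambda_k^s u_k = f_k$ with $u_k(0)=g_k$, $u_k'(0)=h_k$, whose unique solution is exactly the variation--of--parameters formula \eqref{eq:var_parameters}. Existence and uniqueness at the discrete level is therefore immediate.

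The heart of the argument is an energy estimate uniform in $m$. Testing the Galerkin equation with $v = \partial_t u_m(t)\in V_m$ and using that $\calLs$ is self--adjoint with $\langle \calLs w, w\rangle = \|w\|_{\Hs}^2$ for $w\in V_m$, I obtain the identity
\[
  \frac{1}{2}\frac{\diff}{\diff t}\left( \|\partial_t u_m(t)\|_{L^2(\Omega)}^2 + \|u_m(t)\|_{\Hs}^2 \right) = \langle f(t), \partial_t u_m(t)\rangle .
\]
Bounding the right--hand side with Young's inequality by $\tfrac12\|f(t)\|_{L^2(\Omega)}^2 + \tfrac12\|\partial_t u_m(t)\|_{L^2(\Omega)}^2$, integrating over $(0,t)$, and invoking Grönwall's lemma yields a bound on $\|\partial_t u_m\|_{L^\infty(0,T;L^2(\Omega))}$ and $\|u_m\|_{L^\infty(0,T;\Hs)}$ in terms of $\|g\|_{\Hs}$, $\|h\|_{L^2(\Omega)}$ and $\|f\|_{L^2(0,T;L^2(\Omega))}$, with constants independent of $m$ (here the stability of the $L^2$--projections in the respective norms is used to control the initial data).

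Armed with these uniform bounds, I would extract a subsequence converging weakly--$*$ in $L^\infty(0,T;\Hs)$ (for $u_m$) and in $L^\infty(0,T;L^2(\Omega))$ (for $\partial_t u_m$), and pass to the limit in the weak formulation to produce a weak solution $u$ in the sense of Definition \ref{def:weak_2}; the bound on $\partial_t^2 u$ in $L^2(0,T;\Hsd)$ is then read off directly from the equation, since $\partial_t^2 u = f - \calLs u$ with $\calLs\colon \Hs\to\Hsd$ bounded and $f\in L^2(0,T;L^2(\Omega))\hookrightarrow L^2(0,T;\Hsd)$. The recovery of the initial conditions in the limit is legitimized by the continuity $u\in C([0,T];L^2(\Omega))$, $\partial_t u\in C([0,T];\Hsd)$ recorded in Remark \ref{rem:initial_cond_2}. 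Uniqueness follows from linearity: setting $f=g=h=0$, the same energy identity forces $\|\partial_t u(t)\|_{L^2(\Omega)}^2 + \|u(t)\|_{\Hs}^2 \equiv 0$. I expect the only genuinely delicate point to be the rigorous justification of the pairing $\langle \partial_t^2 u_m, \partial_t u_m\rangle = \tfrac12\tfrac{\diff}{\diff t}\|\partial_t u_m\|_{L^2(\Omega)}^2$ at the limit level, where $\partial_t^2 u$ lives only in $\Hsd$; this is precisely why the estimate must be derived on the finite--dimensional Galerkin system, where all quantities are smooth in $t$, and only afterwards transferred to $u$ by lower semicontinuity of the norms under weak convergence.
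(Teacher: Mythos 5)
Your proposal is correct and coincides with the paper's own (unwritten) argument: the authors explicitly state that existence, uniqueness, and the energy estimate \eqref{eq:energy_estimate_2} follow from the standard Galerkin technique of \cite{Evans,Lions,MR3014456}, recording only the decoupled ODE solution \eqref{eq:var_parameters} that your spectral Galerkin spaces $V_m$ reproduce exactly. The one point to tighten is uniqueness: the energy identity cannot be invoked directly at the limit level where $\partial_t^2 u$ lives only in $L^2(0,T;\Hsd)$, so one should either project the homogeneous weak formulation onto each $\varphi_k$ to get $u_k''+\lambda_k^s u_k=0$ with zero data, or use the classical test function $v(t)=\int_t^\sigma u(\tau)\diff\tau$ \`a la Evans --- but this is exactly the delicacy you already flagged.
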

\begin{equation}\label{eq:energy_estimate_2}
\| u\|_{L^{\infty}(0,T;\Hs)} + \| \partial_t u\|_{L^{\infty}(0,T;L^2(\Omega))} \lesssim \| f \|_{L^2(0,T;L^2(\Omega))} + \| g \|_{\Hs} + \| h \|_{L^2(\Omega)},
\end{equation}
where the hidden constant is independent of the problem data.

\subsubsection{Case $\gamma \in (1,2)$} 
\label{sec:fractional_wave_equation}
The analysis of fractional evolution differential equations as \eqref{eq:fractional_wave} and the derivation of energy estimates is not an easy task. The main technical difficulty lies in the fact that an exact integration by parts formula is not available. This has two main consequences. First, it is difficult to introduce a definition of weak solutions to \eqref{eq:fractional_wave} in the sense of distributions \cite{MR3613323}. Second, when deriving an energy estimates boundary terms need to be estimated. 

In this section, we follow \cite{MR3613323,Sakayama} and obtain the existence and uniqueness of a suitable weak solution to problem \eqref{eq:fractional_wave} together with energy estimates. The analysis is based on a solution representation formula. To describe it, we invoke the eigenpairs 
defined in Section \ref{sub:fractional_L}, and formally write the solution to problem \eqref{eq:fractional_wave} as follows:
\begin{equation}
u(x',t) = \sum_{k \geq 1} u_k(t) \varphi_k(x').
\label{eq:formal_series}
\end{equation}
Since, at this formal stage, we have $u(x',0) = g(x')$ and $\partial_t u(x',0) = h(x')$, this representation yields the following fractional initial value problem for $u_k$:
\begin{equation} 
 \label{eq:uk}
    \partial^{\gamma}_t u_k(t) + \lambda_k^s u_k(t) = f_k(t), \, t > 0, \quad
     u_k(0) = g_k, \quad \partial_t u_k(0) = h_k, \quad k \in \mathbb{N},
\end{equation}
where $g_k = (g,\varphi_k)_{L^2(\Omega)}$, $h_k = (h,\varphi_k)_{L^2(\Omega)}$
and $f_{k}(t) = (f(\cdot,t),\varphi_k)_{L^2(\Omega)}$. The theory of fractional ordinary differential equations \cite{fractional_book,Samko} gives a unique function $u_k$ satisfying problem \eqref{eq:uk}. In addition, an explicit representation formula for the solution $u_k$ holds (see, for instance, \cite[formula (2.1)]{MR3613323} or \cite{Sakayama}):
\begin{multline}
\label{eq:u_k_rs}
 u_k(t) =  E_{\gamma,1} (-\lambda_{k}^s t^{\gamma}) g_k + t E_{\gamma,2} (-\lambda_{k}^s t^{\gamma}) h_k 
 \\
 + \int_{0}^t (t-r)^{\gamma-1}E_{\gamma,\gamma}(-\lambda_k^s(t-r)^{\gamma})f_k(r)\diff r.
\end{multline}

With the help of identities \eqref{eq:G_derivatives2}, \eqref{eq:G_derivatives3}, and \eqref{eq:G_derivatives} we can derive formulas for the derivatives, of order $1$ and $\gamma \in (1,2)$, of $u_k$  which shall prove useful in the analysis that follows:
\begin{multline}
 \frac{\diff u_k}{\diff t} (t) = -\lambda_k^s t^{\gamma-1} E_{\gamma,\gamma}(-\lambda_k^s t^{\gamma})g_k 
 \\ 
 + E_{\gamma,1}(-\lambda_k^s t^{\gamma})h_k + \int_{0}^t (t-r)^{\gamma-2} E_{\gamma,\gamma-1}(-\lambda_k^s(t-r)^{\gamma} ) f_k(r) \diff r
 \label{eq:du_k_rs}
\end{multline}
and
\begin{multline}
 \partial_t^{\gamma} u_k(t) = -\lambda_k^s E_{\gamma,1}(-\lambda_k^s t^{\gamma})g_k  - t \lambda_k^s E_{\gamma,2}(-\lambda_k^s t^{\gamma})h_k
 \\ 
 -\lambda_k^s \int_{0}^t (t-r)^{\gamma-1} E_{\gamma,\gamma}(-\lambda_k^s(t-r)^{\gamma} ) f_k(r) \diff r + f_k(t);
 \label{eq:dgammau_k_rs}
\end{multline}
see \cite[Lemma 2.3]{MR3613323} and \cite[formulas (3.8) and (3.12)]{Sakayama}, respectively.

We now proceed to make rigorous the previous formal considerations and show that the expression \eqref{eq:formal_series}, where $u_k(t)$ are given by \eqref{eq:u_k_rs}, converges. This gives rise to a notion of weak solution for \eqref{eq:fractional_wave} which will satisfy, on the basis of \eqref{eq:u_k_rs}, the solution representation formula
\begin{equation}
 \label{eq:solution_to_fractional_wave}
 u(x',t) = G_{\gamma}(t) g + H_{\gamma}(t) h + \int_{0}^t F_{\gamma}(t-r)f(x',r)\diff r,
\end{equation}
where the solution operators $G_{\gamma}$, $H_{\gamma}$ and $F_{\gamma}$ are defined as follows:
For $f \equiv 0$ and $h \equiv 0$, and $f \equiv 0$ and $g \equiv 0$, we define, respectively,
\begin{equation}
 \label{eq:sol_rep_GH}
 G_{\gamma}(t) w = \sum_{k = 1}^{\infty} E_{\gamma,1} (-\lambda_{k}^s t^{\gamma}) w_k  \varphi_k(x'),
\quad
 H_{\gamma}(t) w = \sum_{k = 1}^{\infty} t E_{\gamma,2} (-\lambda_{k}^s t^{\gamma}) w_k  \varphi_k(x'),
\end{equation}
where $w_k = (w, \varphi_k)_{L^2(\Omega)}$. We also define the solution operator for $g \equiv 0$ and $h \equiv 0$ as
\begin{equation}
 \label{eq:sol_rep_F}
 F_{\gamma}(t) w = \sum_{k = 1}^{\infty} t^{\gamma-1} E_{\gamma,\gamma} (-\lambda_{k}^s t^{\gamma}) w_k  \varphi_k(x').
\end{equation}
We immediately comment that, in view of the estimate \eqref{ML_estimate}, for all $t > 0$, $G_{\gamma}(t)$, $F_{\gamma}(t)$ and $H_{\gamma}(t)$ belong to $\mathcal{B}(L^2(\Omega))$ \cite{MR2595950,Podlubny,Sakayama}.

We now introduce the following notion of weak solution.

\begin{definition}[weak solution for $\gamma \in (1,2)$]
\label{def:weak_waveg12}
We call $u \in L^{\infty}(0,T;\Hs)$, with $\partial_t u \in L^{\infty}(0,T;L^2(\Omega))$ and $\partial_t^{\gamma} u \in L^2(0,T;\Hsd)$, a weak solution of problem \eqref{eq:fractional_wave} if $u(0) = g$, $\partial_t u (0) = h$, and for a.e. $t \in (0,T)$,
 \[
  \langle \partial_t^{\gamma} u , v \rangle + \langle \mathcal{L}^s u , v \rangle
  = \langle f , v \rangle \quad \forall v \in \Hs,
\]
where $\langle \cdot , \cdot \rangle$ denotes the the duality pairing between $\Hs$ and $\Hsd$.
\end{definition}

Notice that in the previous definition we are not being specific about the smoothness in the problem data, nor the sense in which the initial conditions are understood. The following result addresses these issues and provides existence and uniqueness of a weak solution.

\begin{theorem}[well--posedness for $\gamma \in (1,2)$] 
\label{thm:wp_gamma}
Let $s \in (0,1)$, $\gamma \in (1,2)$, $f \in L^{\infty}(0,T;L^2(\Omega))$, $g \in \mathbb{H}^{2s}(\Omega)$ and $h \in L^2(\Omega)$. Then, problem \eqref{eq:fractional_wave} has a unique weak solution which is given by the solution representation formula \eqref{eq:solution_to_fractional_wave}. In addition,
\end{theorem}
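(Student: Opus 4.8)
The plan is to construct the solution directly through the series representation \eqref{eq:solution_to_fractional_wave}, show that it converges in the three spaces dictated by Definition \ref{def:weak_waveg12}, verify that the limit satisfies the weak formulation together with the initial conditions, and finally deduce uniqueness and collect the energy estimate. The single workhorse throughout is the decay bound \eqref{ML_estimate}: since $-\lambda_k^s t^\gamma$ is a negative real number, $\arg(-\lambda_k^s t^\gamma)=\pi$ lies in the admissible sector, so $|E_{\gamma,\mu}(-\lambda_k^s t^\gamma)|\lesssim (1+\lambda_k^s t^\gamma)^{-1}$ for each of the relevant indices $\mu\in\{1,2,\gamma,\gamma-1\}$.

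First I would establish the convergence bounds. Inserting \eqref{ML_estimate} into the coefficients \eqref{eq:u_k_rs} and summing against the weights $\lambda_k^s$, the change of variables $\tau=\lambda_k^{s/\gamma}t$ renders each Mittag--Leffler argument dimensionless and converts every prefactor into $\lambda_k^{\theta}$ times a bounded universal integral or supremum; collecting the resulting powers of $\lambda_k$ produces the stated spatial norms. For the $G_\gamma$ and $H_\gamma$ contributions this yields $\|u\|_{L^\infty(0,T;\Hs)}\lesssim \|g\|_{\Hs}+\|h\|_{L^2(\Omega)}$. The same rescaling applied to \eqref{eq:du_k_rs} shows that the $g$--contribution to $\partial_t u$ is controlled by $\|g\|_{\mathbb{H}^{2s/\gamma}(\Omega)}$; since $\gamma>1$ gives $2s/\gamma<2s$, this is dominated by $\|g\|_{\mathbb{H}^{2s}(\Omega)}$, which is precisely what motivates the hypothesis $g\in\mathbb{H}^{2s}(\Omega)$, while the $h$--term contributes $\|h\|_{L^2(\Omega)}$.

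The delicate terms are the forcing convolutions in $u$ and in $\partial_t u$, whose kernels $(t-r)^{\gamma-1}E_{\gamma,\gamma}$ and $(t-r)^{\gamma-2}E_{\gamma,\gamma-1}$ are singular, the latter genuinely so as $\gamma\downarrow 1$. Here I would avoid Young's inequality against $\|f_k\|_{L^\infty(0,T)}$, since $\sum_k\|f_k\|_{L^\infty(0,T)}^2$ is not controlled by $\|f\|_{L^\infty(0,T;L^2(\Omega))}$, and instead use a weighted Cauchy--Schwarz splitting of the kernel $\chi_k$ into $|\chi_k|^{1/2}|\chi_k|^{1/2}$: bounding $\|\chi_k\|_{L^1(0,t)}$ uniformly in $k$ by the rescaling above and using $\sup_k|\chi_k(\sigma)|\lesssim \sigma^{\gamma-2}\in L^1(0,T)$, one may interchange the sum and the time integral to obtain $\sum_k |C_k(t)|^2\lesssim \int_0^t (t-r)^{\gamma-2}\|f(r)\|_{L^2(\Omega)}^2\diff r\lesssim T^{\gamma-1}\|f\|_{L^\infty(0,T;L^2(\Omega))}^2$, uniformly in $t$. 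For the $\Hsd$ bound of $\partial_t^\gamma u$ I would bypass \eqref{eq:dgammau_k_rs} altogether and use the identity $\partial_t^\gamma u=f-\calLs u$ inherited coefficientwise from \eqref{eq:uk}; since $\calLs\colon\Hs\to\Hsd$ is an isometry, $\|\partial_t^\gamma u\|_{L^2(0,T;\Hsd)}\le \|f\|_{L^2(0,T;\Hsd)}+\|u\|_{L^2(0,T;\Hs)}$, and both summands are already controlled.

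With convergence secured, verifying that $u$ is a weak solution is routine: testing the scalar identities $\partial_t^\gamma u_k+\lambda_k^s u_k=f_k$ against $v=\sum_k v_k\varphi_k\in\Hs$ reproduces the weak formulation of Definition \ref{def:weak_waveg12}, while the initial conditions follow from $E_{\gamma,1}(0)=E_{\gamma,2}(0)=1$ together with the continuity in time supplied by the uniform estimates, noting that the $g$-- and $f$--terms in \eqref{eq:du_k_rs} vanish as $t\downarrow 0$ precisely because $\gamma>1$. Uniqueness then reduces to the scalar level: a weak solution with $f=g=h=0$ has coefficients solving $\partial_t^\gamma w_k+\lambda_k^s w_k=0$ with $w_k(0)=\partial_t w_k(0)=0$, forcing $w_k\equiv 0$ by the uniqueness theory for fractional ordinary differential equations. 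Assembling the bounds above gives the asserted energy estimate. I expect the main obstacle to be exactly the forcing terms: making the convolution estimates uniform in $t$ and robust as $\gamma\downarrow 1$, where $(t-r)^{\gamma-2}$ becomes strongly singular, is the crux, and the weighted Cauchy--Schwarz device described above is what renders it integrable.
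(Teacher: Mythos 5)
Your proposal follows the same overall route as the paper: construct $u$ from the spectral series with coefficients \eqref{eq:u_k_rs}, use the Mittag--Leffler decay \eqref{ML_estimate} to prove convergence of $\sum_k u_k\varphi_k$ in $L^\infty(0,T;\Hs)$ and of $\sum_k\partial_t u_k\varphi_k$ in $L^2$ and $L^\infty$ of $(0,T)$ with values in $L^2(\Omega)$, recover $\partial_t^\gamma u\in L^2(0,T;\Hsd)$ from the equation via the isomorphism $\calLs:\Hs\to\Hsd$, and reduce uniqueness to the scalar fractional ODEs. There is, however, one point where you genuinely depart from the paper, and it is to your credit. In its Step 3 the paper obtains the $L^\infty(0,T;L^2(\Omega))$ bound \eqref{eq:energy_3} for $\partial_t u$ by invoking ``similar arguments'' to its Step 2, where Young's inequality is applied to the convolution $\eta_k\star f_k$ with $\eta_k(t)=t^{\gamma-2}E_{\gamma,\gamma-1}(-\lambda_k^s t^\gamma)$; applied pointwise in $t$ this gives $\sum_k\|\eta_k\|_{L^1}^2\|f_k\|_{L^\infty(0,T)}^2$, and, as you correctly observe, $\sum_k\|f_k\|_{L^\infty(0,T)}^2$ is not controlled by $\|f\|_{L^\infty(0,T;L^2(\Omega))}^2=\sup_t\sum_k f_k(t)^2$. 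Your weighted Cauchy--Schwarz splitting of the kernel into $|\chi_k|^{1/2}|\chi_k|^{1/2}$, followed by the interchange of sum and integral, yields $\sum_k|C_k(t)|^2\lesssim T^{\gamma-1}\int_0^t(t-r)^{\gamma-2}\|f(r)\|^2_{L^2(\Omega)}\diff r$ and thus closes this step rigorously with exactly the norm appearing in \eqref{eq:energy_estimate_gamma_2}; this is the same device the paper itself uses in its Step 1, so your argument is a consistent and more careful execution of the intended proof. Your observation that the $g$--contribution to $\partial_t u$ is in fact controlled by $\|g\|_{\mathbb{H}^{2s/\gamma}(\Omega)}$, which is weaker than the hypothesis $g\in\mathbb{H}^{2s}(\Omega)$, is also correct (the paper settles for the cruder bound $\lambda_k^s t^{\gamma-1}\le\lambda_k^s T^{\gamma-1}$), though it does not change the statement. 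The remaining ingredients --- verification of the weak formulation and of the initial conditions, and uniqueness via the scalar problems \eqref{eq:uk} --- match what the paper does or leaves to the reader. One cosmetic remark: neither your argument nor the paper's is uniform as $\gamma\downarrow 1$, since both rely on $\|r^{\gamma-2}\|_{L^1(0,T)}=T^{\gamma-1}/(\gamma-1)$; you flag this honestly, and it is consistent with the blow--up the paper acknowledges elsewhere.
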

\begin{equation}\label{eq:energy_estimate_gamma_1}
\| u\|_{L^{\infty}(0,T;\Hs)} + \| \partial_t u\|_{L^{2}(0,T;L^2(\Omega))} 
\lesssim \| f \|_{L^2(0,T;L^2(\Omega))} + \| g \|_{\Hs} + \| h \|_{L^2(\Omega)},
\end{equation}
and
\begin{equation}\label{eq:energy_estimate_gamma_2}
\| \partial_t u\|_{L^{\infty}(0,T;L^2(\Omega))} \lesssim \| f \|_{L^{\infty}(0,T;L^2(\Omega))} + \| g \|_{\mathbb{H}^{2s}(\Omega)} + \| h \|_{L^2(\Omega)}.
\end{equation}
The hidden constants, in both inequalities, are independent of the problem data.
\begin{proof}
Let us first show that the function given by the representation formula \eqref{eq:solution_to_fractional_wave} has the requisite smoothness and is indeed a solution to \eqref{eq:fractional_wave}. This will be done, with the aid of representation formulas \eqref{eq:u_k_rs} and \eqref{eq:du_k_rs}, in four steps.

\noindent \boxed{\emph{Step 1.}} The goal of this step is to analyze the convergence of $\sum_{k\geq1} u_k\varphi_k$. To accomplish this task, we let $m,n \in \mathbb{N}$ with $n \geq m$. We thus invoke the representation formula \eqref{eq:u_k_rs} for $u_k(t)$ and the estimate \eqref{ML_estimate} to conclude, for $t \in [0,T]$, that
\begin{equation}
  \begin{aligned}
 \left\| \sum_{k=m}^n u_k(t) \varphi_k \right\|^2_{\Hs} &= \sum_{k=m}^{n} \lambda_k^s u_k(t)^2 \lesssim 
  \sum_{k=m}^n \frac{\lambda_k^s}{(1+\lambda_k^s t^\gamma)^2} g_k^2  \\ &+  t^{2-\gamma} \sum_{k=m}^n   \frac{\lambda_k^s t^{\gamma}}{(1 + \lambda_k^s t^{\gamma})^2} h_k^2 
  \\
  &+ \sum_{k=m}^{n} \lambda_k^s \left( \int_0^{t} (t-r)^{\gamma-1} E_{\gamma,\gamma} (-\lambda_{k}^s (t-r)^{\gamma})f_k(r) \diff r\right)^2.
  \end{aligned}
  \label{eq:first_step}
\end{equation} 
We denote the last term on the right--hand side of the previous expression by $\textrm{I}$ and bound it by using the Cauchy--Schwarz inequality and the estimate \eqref{ML_estimate}. We thus obtain that
\begin{multline}
  \textrm{I}  \lesssim \sum_{k=m}^{n}  \left( \int_0^t \frac{\lambda_k^s(t-r)^{\gamma}}{ ( 1+ \lambda_k^s(t-r)^{\gamma})^2}  (t-r)^{\gamma-2} \diff r\right)  \left( \int_0^t f_k^2(r) \diff r \right) 
  \\
  \lesssim \sum_{k=m}^{n} \left( \int_0^t (t-r)^{\gamma-2} \diff r\right)\left( \int_0^T f_k^2(t) \diff t \right)
  \lesssim \sum_{k=m}^{n} t^{\gamma-1} \int_0^T f_k^2(t) \diff t .
\end{multline}
Replacing this estimate into \eqref{eq:first_step} we arrive at
\begin{equation}
 \sup_{t \in [0,T]} \sum_{k=m}^{n} \lambda_k^s u_k(t)^2 \lesssim  \sum_{k=m}^n \lambda_k^s g_k^2 + T^{2-\gamma} \sum_{k=m}^n h_k^2 + T^{\gamma -1}\sum_{k=m}^{n} \int_0^T f_k^2(t) \diff t.
 \label{eq:energy_1}
\end{equation}
Consequently, since $\gamma \in (1,2)$, $g \in \Hs$, $h \in L^2(\Omega)$ and $f \in L^2(0,T;L^2(\Omega))$, the previous estimate allows us to obtain that
\begin{equation}
 \lim_{m,n \rightarrow \infty} \sup_{t \in [0,T] }\left\| \sum_{k=m}^n u_k(t) \varphi_k \right\|_{\Hs}  = 0,
 \label{eq:limit_1}
\end{equation}
\ie $u \in L^\infty(0,T;\Hs)$.

\noindent \boxed{\emph{Step 2.}} In this step we study the convergence of the formal expression $\sum_{k \geq 1} \partial_t u_k\varphi_k$. Let $m,n \in \mathbb{N}$ with $n \geq m$. 
In view of the representation formula \eqref{eq:du_k_rs} for $\partial_t u_k(t)$, similar arguments to the ones used in Step 1 allow us to obtain that
\begin{multline}
 \left\| \sum_{k=m}^n \partial_t u_k(t) \varphi_k \right\|_{L^2(0,T;L^2(\Omega))}^{2} 
 \lesssim \int_{0}^T t^{\gamma-2}\left( \sum_{k=m}^n  \frac{ \lambda_k^{s}t^{\gamma} }{(1 + \lambda_k^s t^{\gamma})^2} \lambda_k^s g_k^2 \right) \diff t 
 \\
 + T \sum_{k=m}^n  h_k^2 + \int_0^T  \sum_{k=m}^{n} \left( \int_0^{t} (t-r)^{\gamma-2} E_{\gamma,\gamma-1} (-\lambda_{k}^s (t-r)^{\gamma})f_k(r) \diff r\right)^2 \diff t.
 \label{eq:second_step}
\end{multline}
Let us denote the last term on the right--hand side of the previous expression by $\textrm{II}=\sum_{k=m}^n \textrm{II}_k$. With this notation it is immediate to see that $\textrm{II}_k$ is the square of the $L^2(0,T)$--norm of the convolution of the functions $\eta_k(t) = t^{\gamma-2}E_{\gamma,\gamma-1}(-\lambda_k^s t^\gamma)$ and $f_k$. Therefore, by Young's inequality for convolutions, we obtain that
\begin{align*}
  \textrm{II}_k \leq \| \eta_k \|_{L^1(0,T)}^2 \| f_k \|_{L^2(0,T)}^2.
\end{align*}
In addition, \eqref{ML_estimate} allows us to see that
\[
  \| \eta_k \|_{L^1(0,T)} = \int_0^T t^{\gamma-2}|E_{\gamma,\gamma-1}(-\lambda_k^s t^\gamma)| \diff t \lesssim \int_0^T t^{\gamma -2} \diff t \lesssim T^{\gamma-1}.
\]
In conclusion, we have proved the bound
\[
\textrm{II} \lesssim T^{2(\gamma-1)} \sum_{k=m}^{n} \int_0^T f_k^2(t) \diff t .
\]
We thus replace the previous estimate into \eqref{eq:second_step} and conclude that
\begin{equation}
 \int_{0}^T \sum_{k=m}^n |\partial_t u_k(t)|^2\diff t  \lesssim T^{\gamma-1}  \sum_{k=m}^n \lambda_k^s g_k^2 + T  \sum_{k=m}^n h_k^2 + T^{2(\gamma-1)} \sum_{k=m}^{n} \int_0^T f_k^2(t) \diff t.
 \label{eq:energy_2}
\end{equation}
This immediately yields
\begin{equation}
 \lim_{m,n \rightarrow \infty} \left\| \sum_{k=m}^n \partial_t u_k(t) \varphi_k \right\|_{L^2(0,T;L^2(\Omega))}  = 0.
 \label{eq:limit_2}
\end{equation}

\noindent \boxed{\emph{Step 3.}} In this step we analyze, again, the convergence of $\sum_k \partial_t u_k\varphi_k$, but on the space $L^{\infty}(0,T;L^2(\Omega))$. To accomplish this task, we will assume $g \in \mathbb{H}^{2s}(\Omega)$, $h \in L^2(\Omega)$ and $f \in L^{\infty}(0,T;L^2(\Omega))$. With this setting at hand, similar arguments to the ones that led to \eqref{eq:limit_2} allow us to conclude that
\begin{align}
 \sup_{t \in [0,T]} \sum_{k=m}^n |\partial_t u_k(t)|^2\diff t  & \lesssim T^{2(\gamma-1)}  \sum_{k=m}^n \lambda_k^{2s} g_k^2 
 \nonumber
 \\
 & + \sum_{k=m}^n h_k^2 + T^{2(\gamma-1)}  \sup_{t \in [0,T]} \sum_{k=m}^n f_k(t)^2,
 \label{eq:energy_3}
\end{align}
and thus that
\begin{equation}
 \lim_{m,n \rightarrow \infty} \sup_{t \in [0,T]} \left\| \sum_{k=m}^n \partial_t u_k(t) \varphi_k \right\|_{L^2(\Omega)}  = 0.
 \label{eq:limit_3}
\end{equation}

\noindent \boxed{\emph{Step 4.}} Collecting the results \eqref{eq:limit_1}, \eqref{eq:limit_2} and \eqref{eq:limit_3}, we conclude that 
\[
 \sum_{k\in \mathbb{N}} u_k(t) \varphi_k(x'), \qquad \sum_{k\in \mathbb{N}} \partial_t u_k(t) \varphi_k(x')
\]
converge in $L^{\infty}(0,T;\Hs)$ and $L^{\infty}(0,T;L^2(\Omega))$, respectively. Consequently, $u$, given by \eqref{eq:solution_to_fractional_wave}, belongs to $L^{\infty}(0,T;\Hs)$ and its first--order time derivative $\partial_t u \in L^{\infty}(0,T;L^2(\Omega))$. 

Since $\mathcal{L}^s$ is an isomorphism from $\Hs$ onto $\Hsd$ and $f \in L^2(0,T;L^2(\Omega))$, it is immediate that $\partial_t^{\gamma} u \in L^2(0,T;\Hsd)$. The energy estimate \eqref{eq:energy_estimate_gamma_1} follows from \eqref{eq:energy_1} and \eqref{eq:energy_2}, while \eqref{eq:energy_estimate_gamma_2} follows from \eqref{eq:energy_3}. In addition, \eqref{eq:energy_1} and \eqref{eq:energy_3} show that $u \in C([0,T];\Hs)$ and $\partial_t u \in C([0,T];L^2(\Omega))$. We thus conclude that $u$ is a weak solution to problem \eqref{eq:fractional_wave} in the sense of Definition \ref{def:weak_waveg12}. Uniqueness follows standard arguments. We leave details to the reader.
\end{proof}

\begin{remark}[initial conditions $\gamma \in (1,2)$]\rm
The estimate \eqref{eq:energy_1} reveals that $u \in C([0,T];\Hs)$ and thus that the initial condition $u(0)=g$ is well--defined. Since $u \in L^2(0,T;\Hs)$ and $\partial_t u \in L^2(0,T;L^2(\Omega))$, this result can also be obtained by using \cite[Lemma 7.3]{MR3014456}. However, to conclude that $\partial_t u \in C([0,T];L^2(\Omega))$ and thus that $\partial_t u(0)=h$ is well--defined, additional assumptions need to be made on the problem data: $g \in \mathbb{H}^{2s}(\Omega)$ and $f \in L^{\infty}(0,T;L^2(\Omega))$. This is in contrast to the case $\gamma=2$ where, to have $\partial_t u \in C([0,T];L^2(\Omega))$, weaker assumptions are needed on the problem data; see Remark \ref{rem:initial_cond_2}.
\end{remark}

\begin{remark}[weak solution $\gamma \in (1,2)$]\rm
Definition \ref{def:weak_waveg12}, of a weak solution to problem with $\gamma \in (1,2)$, is inspired in the work \cite{Sakayama}. Recently in \cite{MR3613323}, the authors have considered a different definition of weak solutions, that is in turn inspired in \cite{Sakayama} and \cite{MR3465296}; see \cite[Definition 1.1]{MR3613323}. The advantage of this approach is that it allows to show the well--posedness of problem \eqref{eq:fractional_wave} under weaker assumptions on the problem data. In particular, the authors show that $u \in C([0,T];L^2(\Omega))$; see \cite[Theorem 1.1]{MR3613323}. In contrast, in Theorem \ref{thm:wp_gamma} we assume more regularity on problem data mainly to obtain that $\partial_t u \in C([0,T];L^2(\Omega))$.
\end{remark}

\subsection{The extended fractional wave equation}
\label{sub:extended}

In this section we briefly analyze the extended problem \eqref{eq:wave_alpha_extension}--\eqref{eq:initial_cond} and derive a representation formula for its solution. To accomplish this task, let us first recall that $u(x',t) = \sum_{k \geq 1} u_k(t) \varphi_k(x')$ corresponds to the unique weak solution to problem \eqref{eq:fractional_wave}, where, for $k \in \mathbb{N}$, $u_k(t)$ solves \eqref{eq:uk} and $\varphi_k$ is defined as in \eqref{eq:eigenpairs}. Let us thus consider the formal expression 
\begin{equation}
\label{eq:exactforms}
  \ue(x,t) = \sum_{k \geq 1} u_k(t) \varphi_k(x') \psi_k(y),
\end{equation}
where, for $\alpha = 1-2s$, the functions $\psi_k$ solve
\begin{equation}
\label{eq:psik}
\begin{dcases}
  \frac{\diff^2}{\diff y^2}\psi_k(y) + \frac{\alpha}{y} \frac{\diff}{\diff y} \psi_k(y) = \lambda_k \psi_k(y) , 
& y \in (0,\infty), 
\\
\psi_k(0) = 1, & \lim_{y\rightarrow \infty} \psi_k(y) = 0.
\end{dcases}
\end{equation}
If $s = \tfrac{1}{2}$, we thus have $\psi_k(y) = \exp(-\sqrt{\lambda_k}y)$ \cite[Lemma 2.10]{CT:10}; more generally, if $s \in (0,1) \setminus \{ \tfrac{1}{2}\}$, then \cite[Proposition 2.1]{CDDS:11}
\begin{equation}
\label{eq:psik_representation}
 \psi_k(y) = c_s (\sqrt{\lambda_k}y)^s K_s(\sqrt{\lambda_k}y),
\end{equation}
where $c_s = 2^{1-s}/\Gamma(s)$ and $K_s$ denotes the modified Bessel function of the second kind. We refer the reader to \cite[Chapter 9.6]{Abra} and \cite[Chapter 7.8]{MR0435697} for a comprehensive treatment of the Bessel function $K_s$. We immediately comment the following property that the function $\psi_k$ satisfies:
 \[
  \lim_{s \rightarrow \frac{1}{2}}\psi_k(y)= \exp(-\sqrt{\lambda_k}y) \quad \forall y>0.
 \]
This property follows from the fact that 
\[
 c_{\frac{1}{2}} = \sqrt{\tfrac{2}{\pi}}, \quad K_{\frac{1}{2}}(z) = \sqrt{\tfrac{\pi}{2z}}e^{-z};
\]
the latter being a consequence of formulas (9.2.10) and (9.6.10) in \cite{Abra}.

We now notice that in view of \eqref{eq:exactforms} and \eqref{eq:psik} we conclude that
\[
  \ue(x',y,t)|_{y=0} = \sum_{k \geq 1} u_k(t) \varphi_k(x') \psi_k(0) =
  \sum_{k\geq1} u_k(t) \varphi_k(x') = u(x',t),
\]
for a.e.~$t \in (0,T)$, \ie $u = \tr \ue$, where, we recall that, $\tr$ denotes the trace operator and is defined in Section \ref{sub:CaffarelliSilvestre}.

Properties of the modified Bessel function of the second kind $K_s$ imply the following important properties of the functions $\psi_k$ that solve \eqref{eq:psik}. For $s \in (0,1)$, we have that \cite[formula (2.31)]{NOS}
\begin{equation}
 \label{asym-psi_k-p}
 \lim_{y \downarrow 0 } \frac{y^{\alpha}\psi_{k}'(y)}{d_s \lambda_k^s } = - 1,
\end{equation}
and, for $a, b \in \R^+$, $a<b$, that \cite[formula (2.33)]{NOS}
\begin{equation}
 \label{besselenergy}
 \int_{a}^{b} y^{\alpha} \left( \lambda_k \psi_k(y)^2 + \psi_k'(y)^2 \right) \diff y
 = \left. y^{\alpha} \psi_k(y)\psi_k'(y) \right|_{a}^{b}.
\end{equation}

We thus combine the definition \eqref{eq:conormal_derivative} with \eqref{eq:exactforms} and \eqref{asym-psi_k-p} to arrive at
\[
 d_s \tr \partial_t^{\gamma} \ue + \partial_{\nu}^{\alpha} \ue =  d_s \tr \partial_t^{\gamma} \ue - \lim_{y \downarrow 0} y^{\alpha} \ue_y = d_s  \sum_{k=1}^{\infty} \varphi_{k} \left( \partial_t^{\gamma} u_k + \lambda_k^s u_k  \right) = d_s f,
\]
where the last equality is a consequence of the fact that $u_k$ solves \eqref{eq:uk}. These results thus show that $\ue$, given by \eqref{eq:exactforms}, satisfies the boundary condition of problem \eqref{eq:wave_alpha_extension}.

Finally, on the basis of the asymptotic estimate \eqref{asym-psi_k-p}, \eqref{besselenergy} implies that
\begin{align}
\| \nabla \ue(t) \|^2_{L^2(y^{\alpha},\C)} & \lesssim \sum_{k=1}^{\infty} u_k(t)^2 \int_{0}^{\infty} 
y^{\alpha} \left( \lambda_k \psi_k(y)^2 + \psi_k'(y)^2 \right) \diff y
\nonumber
\\
& = d_s \sum_{k=1}^{\infty} \lambda_k^s u_k(t)^2 = d_s \| u(t)\|^2_{\Hs},
\label{norm=} 
\end{align}
for a.e.~$t \in (0,T)$. 

These arguments show that given a function $u \in L^{\infty}(0,T;\Hs)$ there exists $\ue \in L^{\infty}(0,T;\HL(y^{\alpha},\C))$ such that $u = \tr \ue$. Consequently, $\tr$ is such that
\[
 L^{\infty}(0,T;\Hs)\subseteq \tr \left( L^{\infty}(0,T;\HL(y^{\alpha},\C) \right),
\]
\ie $\tr: L^{\infty}(0,T;\HL(y^{\alpha},\C)) \rightarrow L^{\infty}(0,T;\Hs)$ is surjective \cite[Proposition 2.1]{CDDS:11}. Similar arguments to the ones developed in the proof of \cite[Proposition 2.1]{CDDS:11} reveal the inverse inclusion. We thus have that $\tr$ is a bounded linear map such that
\[
L^{\infty}(0,T;\Hs) = \tr \left( L^{\infty}(0,T;\HL(y^{\alpha},\C) \right).
\]

As it will be necessary in what follows we define $a: \HL(y^{\alpha},\C) \times \HL(y^{\alpha},\C) \rightarrow \mathbb{R}$
\begin{equation}
\label{a}
a(w,\phi) :=   \frac{1}{d_s}\int_{\C} y^{\alpha} \left( \mathbf{A}(x) \nabla w \cdot \nabla \phi + c(x') w \phi \right) \diff x. 
\end{equation}

The previous formal considerations, on the basis of the results of Section~\ref{sub:existunique}, suggest to consider the following notion of weak solution for problem \eqref{eq:wave_alpha_extension}--\eqref{eq:initial_cond}.

\begin{definition}[extended weak solution]
\label{def:weak_wave}
We call $\ue \in L^{\infty}(0,T;\HL(y^{\alpha},\C))$, with $\tr \partial_t \ue \in L^{\infty}(0,T;L^2(\Omega))$ and $\tr \partial_{t}^{\gamma} \ue \in L^2(0,T;\Hsd)$, a weak solution of problem \eqref{eq:wave_alpha_extension}--\eqref{eq:initial_cond} if $\tr \ue(0) = g$, $\partial_t \tr \ue(0) = h$ and, for a.e. $t \in (0,T)$,
\begin{equation}
\label{eq:weak_wave}
  \langle \tr \partial_t^{\gamma} \ue , \tr \phi \rangle + a(\ue,\phi) = \langle f , \tr \phi \rangle \quad \forall \phi \in \HL(y^\alpha,\C),
\end{equation}
where $\langle \cdot , \cdot \rangle$ denotes the the duality pairing between $\Hs$ and $\Hsd$ and the bilinear form $a$ is defined as in \eqref{a}.
\end{definition}

\begin{remark}[dynamic boundary condition] \rm
Problem \eqref{eq:weak_wave} is an elliptic problem with a dynamic boundary condition: $\partial_{\nu}^{\alpha} \ue = d_s(f - \tr  \partial_t^{\gamma} \ue)$ on $\Omega \times \{0\}$.
\end{remark}

We have the following localization result \cite{MR2600998,MR3393253,CT:10,CS:07,CDDS:11,MR2737788,MR2954615,ST:10,MR3192423}.

\begin{theorem}[Caffarelli--Silvestre extension]
\label{thm:CS_NOT_ST}
Let $\gamma \in (1,2]$ and $s \in (0,1)$. If $f$, $g$ and $h$ are as in Theorem~\ref{thm:wp_2} (for $\gamma = 2$) or as in Theorem~\ref{thm:wp_gamma} (for $\gamma<2$)
then the unique weak solution of problem \eqref{eq:fractional_wave}, in the sense of Definition \ref{def:weak_2} or Definition \ref{def:weak_waveg12}, respectively, satisfies that $u = \tr \ue$, where $\ue$ denotes the unique weak solution to problem \eqref{eq:wave_alpha_extension} in the sense of Definition \ref{def:weak_wave}.
\end{theorem}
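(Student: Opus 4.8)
The plan is to verify that the function $\ue$ introduced formally in \eqref{eq:exactforms} is a weak solution in the sense of Definition~\ref{def:weak_wave}, and then to show that any weak solution must coincide with it. I focus on $\gamma\in(1,2)$; the case $\gamma=2$ is entirely analogous, using the representation \eqref{eq:var_parameters} in place of \eqref{eq:u_k_rs}. First I would settle the regularity of the constructed $\ue$. Let $u=\sum_{k\ge1}u_k(t)\varphi_k$ be the unique weak solution of \eqref{eq:fractional_wave} furnished by Theorem~\ref{thm:wp_gamma}, and set $\ue=\sum_{k\ge1}u_k(t)\varphi_k\psi_k$. Applying \eqref{norm=} to the partial sums $\sum_{m\le k\le n}u_k(t)\varphi_k\psi_k$ and using $u\in L^\infty(0,T;\Hs)$ shows that this series converges in $\HL(y^\alpha,\C)$ for a.e.~$t$, that $\ue\in L^\infty(0,T;\HL(y^\alpha,\C))$, and that $\|\nabla\ue(t)\|_{L^2(y^\alpha,\C)}\lesssim\|u(t)\|_{\Hs}$. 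Since $\psi_k(0)=1$, the trace acts modewise, so $\tr\ue=u$; because $\tr$ is linear, continuous, and independent of $t$, it commutes with $\partial_t$ and with the Caputo operator $\partial_t^\gamma$, whence $\tr\partial_t\ue=\partial_t u$ and $\tr\partial_t^\gamma\ue=\partial_t^\gamma u$. The required memberships $\tr\partial_t\ue\in L^\infty(0,T;L^2(\Omega))$ and $\tr\partial_t^\gamma\ue\in L^2(0,T;\Hsd)$ and the initial conditions $\tr\ue(0)=g$, $\partial_t\tr\ue(0)=h$ then follow from Theorem~\ref{thm:wp_gamma}.

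The heart of the argument is the identity $a(\ue,\phi)=\langle\calLs u,\tr\phi\rangle$ for every $\phi\in\HL(y^\alpha,\C)$. To obtain it I would first record the modewise statement. Each $\varphi_k\psi_k$ solves the extension PDE in $\C$, because $\varphi_k$ is an eigenfunction of $\calL$ and $\psi_k$ solves the ODE \eqref{eq:psik}; moreover \eqref{asym-psi_k-p} yields $\partial_\nu^\alpha(\varphi_k\psi_k)=d_s\lambda_k^s\varphi_k$. Thus $\varphi_k\psi_k$ is precisely the Caffarelli--Silvestre extension \eqref{eq:extension}--\eqref{eq:fraclap} of $\varphi_k$ with datum $\lambda_k^s\varphi_k$, whose weak form reads $a(\varphi_k\psi_k,\phi)=\lambda_k^s(\varphi_k,\tr\phi)_{L^2(\Omega)}$. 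Since $a(\cdot,\phi)$ is a bounded linear functional on $\HL(y^\alpha,\C)$ and the partial sums of $\ue(t)$ converge there, I may multiply by $u_k(t)$ and sum to get $a(\ue(t),\phi)=\sum_{k\ge1}\lambda_k^s u_k(t)(\varphi_k,\tr\phi)_{L^2(\Omega)}=\langle\calLs u(t),\tr\phi\rangle$, where the last step uses $\tr\phi\in\Hs$ from \eqref{Trace_estimate}. Testing the weak formulation of Definition~\ref{def:weak_waveg12} with $v=\tr\phi$ and inserting $\tr\partial_t^\gamma\ue=\partial_t^\gamma u$ together with the identity just derived produces exactly \eqref{eq:weak_wave}; hence $\ue$ is a weak solution.

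For uniqueness I would show that every weak solution is the harmonic extension of its own trace. Let $w$ be the difference of two weak solutions, so that $w$ satisfies \eqref{eq:weak_wave} with $f\equiv0$ and $\tr w(0)=0$, $\partial_t\tr w(0)=0$. Choosing test functions $\phi$ with $\tr\phi=0$ forces $a(w,\phi)=0$, \ie $w(t)$ weakly solves the homogeneous extension problem in $\C$; expanding in the basis $\{\varphi_k\}$ this gives $w(t)=\sum_{k\ge1}(\tr w(t))_k\,\varphi_k\psi_k$, so that $a(w,\phi)=\langle\calLs\tr w,\tr\phi\rangle$ for all $\phi$. Feeding this back into \eqref{eq:weak_wave} and using that $\tr$ maps onto $\Hs$ (see \eqref{Trace_estimate}) shows that $\tr w$ is a weak solution of \eqref{eq:fractional_wave} with vanishing data; the uniqueness part of Theorem~\ref{thm:wp_gamma} gives $\tr w=0$, and therefore $w$ is the harmonic extension of $0$, namely $w=0$. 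Applied to the constructed $\ue$, the same reasoning identifies it as the unique weak solution, completing the proof.

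The main obstacle I anticipate is making the modewise manipulations rigorous: establishing that each $\varphi_k\psi_k$ is an admissible weak extension, so that $a(\varphi_k\psi_k,\phi)=\lambda_k^s(\varphi_k,\tr\phi)_{L^2(\Omega)}$ holds for \emph{all} $\phi\in\HL(y^\alpha,\C)$ and not merely those of product form, and justifying the interchange of the infinite sum with the bilinear form $a$ and with $\partial_t^\gamma$. The convergence furnished by \eqref{norm=}, the energy identity \eqref{besselenergy}, and the continuity of $a$ on $\HL(y^\alpha,\C)\times\HL(y^\alpha,\C)$ are exactly the ingredients that legitimize these limit passages; once they are in hand, the remaining verifications are bookkeeping.
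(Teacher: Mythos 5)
Your proposal is correct and follows essentially the same route as the paper: the separation--of--variables representation \eqref{eq:exactforms} with $\psi_k$ solving \eqref{eq:psik}, the trace identification via $\psi_k(0)=1$, and the energy identity \eqref{norm=} are exactly the ingredients the paper assembles in Section~\ref{sub:extended} before stating the theorem (whose verification it otherwise delegates to the cited literature and to the arguments of Theorem~\ref{thm:wp_gamma}). Your explicit weak--form identity $a(\ue,\phi)=\langle\calLs u,\tr\phi\rangle$ is the variational counterpart of the paper's strong--form computation of $\partial_\nu^\alpha\ue$ via \eqref{asym-psi_k-p}, and your uniqueness argument through the $a$-harmonic--extension characterization is a welcome elaboration of a step the paper leaves to the references.
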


We now present the well--posedness of problem \eqref{eq:wave_alpha_extension}--\eqref{eq:initial_cond} together with energy estimates for its solution.

\begin{theorem}[well--posedness for $\gamma = 2$] 
\label{thm:energy_wave_2}
Given $s \in (0,1)$, $\gamma = 2$, $f \in L^2(0,T;L^2(\Omega))$, $h \in \Hs$ and $g \in L^2(\Omega)$, then problem problem \eqref{eq:wave_alpha_extension}--\eqref{eq:initial_cond} has a unique weak solution in the sense of Definition \ref{def:weak_wave}. In addition,
\end{theorem}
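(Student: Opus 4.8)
The plan is to build the solution explicitly from the separated series \eqref{eq:exactforms} and to transport, through the trace, all the information already available for the reduced problem \eqref{eq:fractional_wave} via Theorem~\ref{thm:wp_2}. The starting point is the unique weak solution $u(x',t)=\sum_{k\ge1}u_k(t)\varphi_k(x')$ of \eqref{eq:fractional_wave} for $\gamma=2$, whose coefficients $u_k$ are given by \eqref{eq:var_parameters}. I would then set $\ue(x,t)=\sum_{k\ge1}u_k(t)\varphi_k(x')\psi_k(y)$, with $\psi_k$ solving \eqref{eq:psik}, and verify that this formal expression defines a genuine weak solution in the sense of Definition~\ref{def:weak_wave}.

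First I would establish convergence in the correct spaces. Fixing $t$ and applying the energy identity \eqref{norm=} to the partial sums $\sum_{k=m}^n u_k(t)\varphi_k\psi_k$ yields
\[
\Big\| \nabla \sum_{k=m}^n u_k(t)\varphi_k\psi_k \Big\|_{L^2(y^\alpha,\C)}^2 = d_s \sum_{k=m}^n \lambda_k^s u_k(t)^2 = d_s\Big\| \sum_{k=m}^n u_k(t)\varphi_k \Big\|_{\Hs}^2,
\]
so that the weighted Poincar\'e inequality \eqref{Poincare_ineq}, together with the bound $u\in L^\infty(0,T;\Hs)$ coming from Theorem~\ref{thm:wp_2}, forces these partial sums to be Cauchy in $\HL(y^\alpha,\C)$ uniformly in $t$. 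Hence $\ue\in L^\infty(0,T;\HL(y^\alpha,\C))$ with $\|\ue\|_{L^\infty(0,T;\HL(y^\alpha,\C))}\lesssim\|u\|_{L^\infty(0,T;\Hs)}$. Because $\psi_k(0)=1$, I then have $\tr\ue=u$, so that $\tr\partial_t\ue=\partial_t u$ and $\tr\partial_t^2\ue=\partial_t^2 u$ inherit the regularity $\tr\partial_t\ue\in L^\infty(0,T;L^2(\Omega))$ and $\tr\partial_t^2\ue\in L^2(0,T;\Hsd)$ directly from Theorem~\ref{thm:wp_2}, and the initial conditions $\tr\ue(0)=g$, $\partial_t\tr\ue(0)=h$ are met.

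Next I would check the variational identity \eqref{eq:weak_wave}. For each fixed $t$, the function $\ue(\cdot,t)$ is precisely the Caffarelli--Silvestre extension \eqref{eq:extension} of $u(\cdot,t)$, so that $a(\ue(t),\phi)=\langle\mathcal{L}^s u(t),\tr\phi\rangle$ for every $\phi\in\HL(y^\alpha,\C)$; the essential computation is the integration by parts in the extended variable encoded in \eqref{besselenergy}, combined with the asymptotics \eqref{asym-psi_k-p} and a density argument reducing $\phi$ to finite combinations of the form $v\,\psi_j$. Substituting $\tr\partial_t^2\ue=\partial_t^2 u$ then turns \eqref{eq:weak_wave} into
\[
\langle\partial_t^2 u,\tr\phi\rangle+\langle\mathcal{L}^s u,\tr\phi\rangle=\langle f,\tr\phi\rangle,
\]
which holds for all $\phi$ because $u$ is a weak solution in the sense of Definition~\ref{def:weak_2} and $\tr$ maps $\HL(y^\alpha,\C)$ onto $\Hs$. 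The energy estimate is then immediate: combining the norm identity \eqref{norm=} and $\tr\ue=u$ with \eqref{eq:energy_estimate_2} yields the asserted bound for $\|\ue\|_{L^\infty(0,T;\HL(y^\alpha,\C))}+\|\tr\partial_t\ue\|_{L^\infty(0,T;L^2(\Omega))}$ in terms of the data.

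For uniqueness I would argue through the correspondence of Theorem~\ref{thm:CS_NOT_ST}: any weak solution $\ue$ of \eqref{eq:wave_alpha_extension}--\eqref{eq:initial_cond} has trace $u=\tr\ue$ solving \eqref{eq:fractional_wave} weakly, which is unique by Theorem~\ref{thm:wp_2}; since the profile in $y$ is pinned down mode-by-mode by the boundary value problem \eqref{eq:psik}, $\ue$ is determined by its trace and is therefore unique as well. I expect the main obstacle to be the rigorous passage from the formal series to the weak identity \eqref{eq:weak_wave} for \emph{arbitrary} test functions $\phi$: one must justify interchanging the bilinear form $a$ with the infinite sum and control the conormal, dynamic boundary term \eqref{eq:conormal_derivative}, which is exactly where the Bessel identities \eqref{besselenergy}--\eqref{asym-psi_k-p} and the surjectivity of the trace do the decisive work.
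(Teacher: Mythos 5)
Your proposal is correct in substance, but it follows a different route from the paper: for this theorem the paper simply invokes the Galerkin method (citing Evans, Lions, et al.) and leaves all details to the reader, whereas you construct the solution explicitly by separation of variables, $\ue=\sum_k u_k(t)\varphi_k(x')\psi_k(y)$, with $u_k$ from \eqref{eq:var_parameters} and $\psi_k$ from \eqref{eq:psik} --- which is in fact the strategy the paper reserves for the fractional case $\gamma\in(1,2)$ in Theorem \ref{thm:energy_gamma_gamma}. The Galerkin route is the more robust one for a hyperbolic-in-time problem (it does not hinge on the spectral decomposition of $\calL$ and extends to non-self-adjoint perturbations), while your spectral construction has the advantage of producing the representation formula \eqref{eq:exactforms} directly, making the trace identity $\tr\ue=u$, the energy identity \eqref{norm=}, and hence the bound \eqref{eq:energy_estimate_wave_2} essentially immediate, and of treating $\gamma=2$ and $\gamma\in(1,2)$ uniformly. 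Two small points to tighten. First, membership $u\in L^\infty(0,T;\Hs)$ alone does not force the partial sums to be Cauchy in $\HL(y^\alpha,\C)$ \emph{uniformly} in $t$; you need the tail estimate $\sup_{t}\sum_{k=m}^n\lambda_k^s u_k(t)^2\lesssim\sum_{k=m}^n\bigl(\lambda_k^s g_k^2+h_k^2+T\|f_k\|_{L^2(0,T)}^2\bigr)$, which does follow from \eqref{eq:var_parameters} but should be stated. Second, in the uniqueness step, a general weak solution in the sense of Definition \ref{def:weak_wave} is not a priori of separated form; the clean argument is that testing \eqref{eq:weak_wave} with $\phi$ of vanishing trace shows $a(\ue(t),\phi)=0$, so by coercivity of $a$ on the zero-trace subspace (via \eqref{Poincare_ineq}) the function $\ue(\cdot,t)$ is the unique $a$-harmonic extension of its trace, and the trace is unique by Theorem \ref{thm:wp_2}. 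Finally, note that the theorem as printed swaps the roles of $g$ and $h$ ($h\in\Hs$, $g\in L^2(\Omega)$); your proof correctly uses the hypotheses $g\in\Hs$, $h\in L^2(\Omega)$ consistent with \eqref{eq:energy_estimate_wave_2}.
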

\begin{multline}\label{eq:energy_estimate_wave_2}
\| \nabla \ue \|_{L^{\infty}(0,T;L^2(y^{\alpha},\C))} + \| \tr \partial_t \ue \|_{L^{\infty}(0,T;L^2(\Omega))} 
\\
\lesssim \| f \|_{L^2(0,T;L^2(\Omega))} + \| g \|_{\Hs} + \| h \|_{L^2(\Omega)},
\end{multline}
where the hidden constant is independent of the problem data.
\begin{proof}
The proof is based  on the so--called Galerkin method \cite{Evans,Lions,MR3014456}. For brevity, we leave the details to the reader. 
\end{proof}

\begin{theorem}[well--posedness for $\gamma \in (1,2)$]
\label{thm:energy_gamma_gamma}
Given $s \in (0,1)$, $\gamma \in (1,2)$, $f \in L^{\infty}(0,T;L^2(\Omega))$, $g \in \mathbb{H}^{2s}(\Omega)$, and $h \in L^2(\Omega)$, then problem \eqref{eq:wave_alpha_extension}--\eqref{eq:initial_cond} has a unique weak solution in the sense of Definition \ref{def:weak_wave}, which is given by \eqref{eq:exactforms}. In addition,
\begin{multline}
\label{eq:energy_estimate_gamma_wave_1}
\| \nabla \ue \|_{L^{\infty}(0,T;L^2(y^{\alpha},\C))} + \| \tr \partial_t \ue\|_{L^{2}(0,T;L^2(\Omega))} 
\\
\lesssim 
\| f \|_{L^2(0,T;L^2(\Omega))} + \| g \|_{\Hs} + \| h \|_{L^2(\Omega)},
\end{multline}
and
\begin{equation}\label{eq:energy_estimate_gamma_wave_2}
\| \tr \partial_t \ue \|_{L^{\infty}(0,T;L^2(\Omega))} \lesssim \| f \|_{L^{\infty}(0,T;L^2(\Omega))} + \| g \|_{\mathbb{H}^{2s}(\Omega)} + \| h \|_{L^2(\Omega)}.
\end{equation}
The hidden constants, in both inequalities, are independent of the problem data.
\end{theorem}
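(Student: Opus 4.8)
The plan is to build the solution explicitly from the series \eqref{eq:exactforms}, transporting to the cylinder all the analysis already performed for \eqref{eq:fractional_wave} in Theorem~\ref{thm:wp_gamma} by means of the trace relation and the Bessel energy identity \eqref{besselenergy}. Concretely, I would fix the weak solution $u=\sum_{k\geq1}u_k\varphi_k$ of \eqref{eq:fractional_wave} furnished by Theorem~\ref{thm:wp_gamma} and define $\ue$ through the partial sums $\ue_N(x,t)=\sum_{k=1}^N u_k(t)\varphi_k(x')\psi_k(y)$. Using the $L^2(\Omega)$--orthonormality and the $\calL$--orthogonality of $\{\varphi_k\}$, together with the block structure $\mathbf{A}=\mathrm{diag}\{A,1\}$, every cross term in $a(\ue_N-\ue_M,\ue_N-\ue_M)$ cancels, so that \eqref{besselenergy} and \eqref{asym-psi_k-p} yield, for a.e.\ $t$, the exact identity $a\big((\ue_N-\ue_M)(t),(\ue_N-\ue_M)(t)\big)=\sum_{k=M+1}^{N}\lambda_k^s u_k(t)^2$. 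Since $a$ is coercive on $\HL(y^\alpha,\C)$ --- by the uniform ellipticity of $A$, the sign condition $c\geq0$, and the Poincar\'e inequality \eqref{Poincare_ineq} --- the convergence \eqref{eq:limit_1} established in the proof of Theorem~\ref{thm:wp_gamma} shows the partial sums are Cauchy, whence $\ue\in L^\infty(0,T;\HL(y^\alpha,\C))$. Taking $M=0$ and passing to the limit in the same computation gives the key identity $a(\ue(t),\ue(t))=\|u(t)\|_{\Hs}^2$ for a.e.\ $t$.

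I would then check that $\ue$ is a weak solution in the sense of Definition~\ref{def:weak_wave}. As $\psi_k$ is independent of $t$ and $\psi_k(0)=1$, one has $\tr\ue=u$, $\tr\partial_t\ue=\partial_t u$ and $\tr\partial_t^\gamma\ue=\partial_t^\gamma u$, so the membership requirements $\tr\partial_t\ue\in L^\infty(0,T;L^2(\Omega))$, $\tr\partial_t^\gamma\ue\in L^2(0,T;\Hsd)$ and the two initial conditions transfer verbatim from Theorem~\ref{thm:wp_gamma}. To obtain the identity \eqref{eq:weak_wave}, I would test against $\phi\in\HL(y^\alpha,\C)$ and integrate by parts over truncated cylinders $\C_\Y$ via the weighted Green's formula, letting $\Y\to\infty$; since each summand $u_k\varphi_k\psi_k$ solves $-\DIV(y^\alpha\mathbf{A}\GRAD\,\cdot\,)+y^\alpha c\,\cdot=0$ in $\C$ by \eqref{eq:eigenpairs} and \eqref{eq:psik}, the interior integral vanishes and only the conormal term survives, giving $a(\ue,\phi)=d_s^{-1}\langle\partial_\nu^\alpha\ue,\tr\phi\rangle$. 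Substituting the boundary identity $\partial_\nu^\alpha\ue=d_s(f-\tr\partial_t^\gamma\ue)$ --- already derived above from \eqref{eq:conormal_derivative}, \eqref{asym-psi_k-p} and the fact that $u_k$ solves \eqref{eq:uk} --- produces exactly \eqref{eq:weak_wave}.

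The energy estimates then follow at once. Coercivity and the norm identity give $\|\GRAD\ue(t)\|_{L^2(y^\alpha,\C)}^2\lesssim a(\ue(t),\ue(t))=\|u(t)\|_{\Hs}^2$, so $\|\GRAD\ue\|_{L^\infty(0,T;L^2(y^\alpha,\C))}\lesssim\|u\|_{L^\infty(0,T;\Hs)}$, while $\tr\partial_t\ue=\partial_t u$ carries over the bounds on $\partial_t u$ unchanged; combining with \eqref{eq:energy_estimate_gamma_1} and \eqref{eq:energy_estimate_gamma_2} yields \eqref{eq:energy_estimate_gamma_wave_1} and \eqref{eq:energy_estimate_gamma_wave_2}. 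For uniqueness I would note that any weak solution, tested against $\phi$ with $\tr\phi=0$, solves the degenerate elliptic equation in the interior of $\C$ and is therefore the $\alpha$--harmonic extension of its own trace; the Caffarelli--Silvestre identification \eqref{eq:extension}--\eqref{eq:fraclap} then shows that $\tr\ue$ is a weak solution of \eqref{eq:fractional_wave} in the sense of Definition~\ref{def:weak_waveg12}, which is unique by Theorem~\ref{thm:wp_gamma}, and uniqueness of the harmonic extension closes the argument. The step I expect to be most delicate is making the weighted Green's formula and the distributional conormal derivative \eqref{eq:conormal_derivative} rigorous --- in particular justifying the interchange of the trace, the time derivatives $\partial_t$, $\partial_t^\gamma$, and the infinite sum, and the vanishing of the top boundary term as $\Y\to\infty$ --- all of which rely on the $L^\infty(0,T;\HL(y^\alpha,\C))$ convergence obtained in the first step.
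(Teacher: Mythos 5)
Your proposal is correct and follows essentially the same route as the paper: the paper's (very terse) proof likewise builds $\ue$ from the separated series \eqref{eq:exactforms}, uses the Bessel energy identity \eqref{besselenergy}--\eqref{norm=} to convert cylinder norms into $\Hs$--norms of the trace, and transfers the energy estimates \eqref{eq:energy_estimate_gamma_1}--\eqref{eq:energy_estimate_gamma_2} from Theorem~\ref{thm:wp_gamma}. You simply make explicit several steps (Cauchy property of the partial sums, verification of \eqref{eq:weak_wave} via Green's formula on truncated cylinders, uniqueness through the Caffarelli--Silvestre identification) that the paper leaves to the reader.
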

\begin{proof}
Existence and uniqueness of a weak solution $\ue$ to problem \eqref{eq:wave_alpha_extension}--\eqref{eq:initial_cond} follows from, on the basis of \eqref{eq:exactforms} and \eqref{eq:psik}, the arguments elaborated in the proof of Theorem \ref{thm:wp_gamma}. These arguments, together with \eqref{norm=}, also allow us to bound $\nabla \ue$ in $L^{\infty}(0,T;L^2(y^{\alpha},\C))$ by the terms that appear on the right--hand side of \eqref{eq:energy_estimate_gamma_wave_1}. The control of the remaining term $\tr \partial_t \ue$ follows from \eqref{eq:energy_estimate_gamma_1} and an application of Theorem \ref{thm:wp_gamma}. The estimate of $\tr \partial_t \ue$ in $L^{\infty}(0,T;L^2(\Omega))$ follows from \eqref{eq:energy_estimate_gamma_2}. This concludes the proof.
\end{proof}

\section{Regularity}
\label{sub:Regularity}

Having obtained conditions that guarantee the existence and uniqueness of solutions for problems \eqref{eq:fractional_wave} and \eqref{eq:wave_alpha_extension}--\eqref{eq:initial_cond} we now study their regularity properties. 

\subsection{Time regularity}
\label{sub:time_regularity}

In \cite{MR2200938}, the authors introduce and analyze a numerical technique based on the Crank--Nicolson scheme to discretize the left--sided Caputo fractional derivative of order $\gamma \in (1,2)$. They obtain stability properties \cite[Theorem 3.2]{MR2200938} and, under the time--regularity assumption that
\begin{equation}
\label{eq:u_C3}
 u \in C^{3}[0,T],
\end{equation}
they derive error estimates for the proposed scheme \cite[Theorem 3.3]{MR2200938}. This solution technique was later used and extended to the design of numerical techniques for different equations involving a left--sided Caputo fractional derivative of order $\gamma \in (1,2)$; see, for instance, \cite{MR3549087,MR2651618,MR3471180,MR2870026,MR3109799,MR2970754,MR3304331}. In all these references it is assumed that the solution $u$ of the involved equation satisfies \eqref{eq:u_C3}. However, \eqref{eq:u_C3} is inconsistent with \eqref{eq:formal_series}. In fact, let us assume that $f \equiv 0$ and $h \equiv 0$ in \eqref{eq:fractional_wave}. Thus, on the basis of \eqref{eq:sol_rep_GH}, properties of the Mittag--Leffler function show that, if $g \neq 0$, the solution $u$ to problem \eqref{eq:fractional_wave} satisfies the following asymptotic estimate:
\[
 u(x',t) = G_{\gamma}g(x) = \left( 1 - \frac{t^{\gamma}}{\Gamma(1+\gamma)} \mathcal{L}^s + \mathcal{O}(t^{2\gamma}) \right) g(x'), \quad t \downarrow 0.
\]
If this is the case, we then expect that $\partial_t^2 u(x',t) \approx t^{\gamma-2} \mathcal{L}^s g(x')$ and that $\partial_t^3 u(x',t) \approx t^{\gamma-3} \mathcal{L}^s g(x')$. Consequently, since $\gamma \in (1,2)$, we have that
\emph{the second derivatives of $u$ with respect to the variable $t$ are unbounded as $t \downarrow 0$.} Unfortunately, we also have that have $\partial_t^3 u \notin L^2(0,T; \Hsd))$. However, this asymptotic also suggests that if $0<\varepsilon<T$
\begin{equation}
\label{eq:weighted_estimates}
\int_0^{\varepsilon} t^\rho \| \partial_t^3 u(\cdot,t) \|_{\Hsd}^2 \diff t \lesssim \| g \|_{\Hs}^2 \int_0^{\varepsilon} t^{\rho+2\gamma-6} \diff t
\end{equation}
is finite provided $\rho > 5 -2\gamma$, \ie $\partial_t^3 u \in L^2( t^\rho,(0,T);\Hsd)$. The justification of this heuristic is the content of Theorem \ref{thm:time_regularity} below. In order to derive such a result, we first establish some suitable bounds for the solution operators $G_{\gamma}$ and $H_{\gamma}$ that are defined in \eqref{eq:sol_rep_GH}. We refer the reader to \cite{M:10,MR1225703} for similar bounds but when $\gamma \in (0,1)$.

\begin{lemma}[estimates for $G_{\gamma}$ and $H_{\gamma}$]
\label{lemma:G_gamma}
Let $\gamma \in (1,2)$, $s \in (0,1)$ and $r \in \mathbb{R}$. If $q$ denotes a positive integer, then, for a.e. $t \in (0,T]$, we have the following estimates
\begin{equation}
\label{eq:G_gamma}
  \| \partial_t^q G_{\gamma}(t) w \|_{\mathbb{H}^r(\Omega)} + \| \partial_t^{q+1}  H_{\gamma}(t) w \|_{\mathbb{H}^r(\Omega)}  \lesssim t^{\gamma -q} \| w \|_{\mathbb{H}^{r+2s}(\Omega)}, 
 \end{equation}
 and
 \begin{equation}
\label{eq:H_gamma}
  \| \partial_t^{q+1}  H_{\gamma}(t) w \|_{\mathbb{H}^r(\Omega)} \lesssim t^{\gamma/2 -q} \| w \|_{\mathbb{H}^{r+s}(\Omega)}.
 \end{equation}
The hidden constants, in both inequalities, are independent of $t$, $w$, $s$, $r$ and $q$.
\end{lemma}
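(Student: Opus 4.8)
The plan is to reduce everything to a single differentiated series and then to exploit the Mittag--Leffler decay estimate \eqref{ML_estimate}. The key preliminary observation is that the two solution operators are linked by differentiation: applying the termwise identity \eqref{eq:G_derivatives2} to the series \eqref{eq:sol_rep_GH} gives $\frac{\diff}{\diff t}\left( t E_{\gamma,2}(-\lambda_k^s t^\gamma)\right) = E_{\gamma,1}(-\lambda_k^s t^\gamma)$, so that $\partial_t H_\gamma(t) w = G_\gamma(t) w$ and hence $\partial_t^{q+1} H_\gamma(t) w = \partial_t^q G_\gamma(t) w$. Consequently it suffices to estimate $\partial_t^q G_\gamma(t) w$ in $\mathbb{H}^r(\Omega)$, and both inequalities in the statement will follow at once. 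First I would differentiate the series defining $G_\gamma$: the formula \eqref{eq:G_derivatives} applied termwise with $\lambda = \lambda_k^s$ yields
\[
 \partial_t^q G_\gamma(t) w = -t^{\gamma-q} \sum_{k=1}^{\infty} \lambda_k^s E_{\gamma,\gamma-q+1}(-\lambda_k^s t^\gamma)\, w_k\, \varphi_k .
\]

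Next I would insert the decay bound. Since $-\lambda_k^s t^\gamma$ is a negative real number we have $|\arg(-\lambda_k^s t^\gamma)| = \pi$, and because $\gamma \in (1,2)$ one can choose $\delta$ with $\pi\gamma/2 < \delta < \pi$; the hypotheses of \eqref{ML_estimate} are thus met with $\mu = \gamma - q + 1 \in \R$, so that $|E_{\gamma,\gamma-q+1}(-\lambda_k^s t^\gamma)| \lesssim (1+\lambda_k^s t^\gamma)^{-1}$ uniformly in $k$. Using the definition \eqref{def:Hs} of the $\mathbb{H}^r(\Omega)$ norm this gives the common estimate
\[
 \| \partial_t^q G_\gamma(t) w \|_{\mathbb{H}^r(\Omega)}^2 \lesssim t^{2(\gamma-q)} \sum_{k=1}^{\infty} \lambda_k^{r+2s}\, \frac{w_k^2}{(1+\lambda_k^s t^\gamma)^2}.
\]
The two claimed bounds now correspond to two ways of handling the denominator. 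For \eqref{eq:G_gamma} I would simply use $(1+\lambda_k^s t^\gamma)^{-2}\le 1$, leaving $\sum_k \lambda_k^{r+2s}w_k^2 = \|w\|_{\mathbb{H}^{r+2s}(\Omega)}^2$ and hence $\|\partial_t^q G_\gamma(t) w\|_{\mathbb{H}^r(\Omega)} \lesssim t^{\gamma-q}\|w\|_{\mathbb{H}^{r+2s}(\Omega)}$. For the estimate \eqref{eq:H_gamma} I would instead trade one power of $\lambda_k^s$ for a factor $t^{-\gamma}$: with $a = \lambda_k^s t^\gamma$ the elementary inequality $a(1+a)^{-2}\le \tfrac14$ for $a\ge 0$ gives $\lambda_k^{2s}(1+\lambda_k^s t^\gamma)^{-2} \le \tfrac14\, t^{-\gamma}\lambda_k^s$, whence $\|\partial_t^q G_\gamma(t) w\|_{\mathbb{H}^r(\Omega)}^2 \lesssim t^{2\gamma-2q-\gamma}\sum_k \lambda_k^{r+s}w_k^2 = t^{\gamma-2q}\|w\|_{\mathbb{H}^{r+s}(\Omega)}^2$. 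Taking square roots and recalling $\partial_t^{q+1}H_\gamma = \partial_t^q G_\gamma$ delivers both displayed inequalities.

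I do not anticipate a genuine obstacle, since once the link $\partial_t H_\gamma = G_\gamma$ is noticed the argument is essentially bookkeeping with two elementary scalar bounds. The only points requiring care are, first, justifying the termwise differentiation of the series — this follows because the bounds just derived show the differentiated series converges absolutely in $\mathbb{H}^r(\Omega)$ for every $t>0$, so the interchange of $\partial_t^q$ and $\sum_k$ is legitimate — and, second, that the constant furnished by \eqref{ML_estimate} depends on $\mu = \gamma - q + 1$ and hence a priori on $q$; in the applications $q$ ranges over a fixed finite set, so the constant may be taken uniform, consistent with the stated independence of $q$.
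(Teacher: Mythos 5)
Your argument is correct and follows essentially the same route as the paper: reduce $\partial_t^{q+1}H_\gamma$ to $\partial_t^q G_\gamma$ via \eqref{eq:G_derivatives2}, differentiate the series termwise with \eqref{eq:G_derivatives}, and then either drop the denominator $(1+\lambda_k^s t^\gamma)^2$ entirely (for \eqref{eq:G_gamma}) or absorb one factor $\lambda_k^s t^\gamma$ into it (for \eqref{eq:H_gamma}). Your closing remark about the constant in \eqref{ML_estimate} depending on $\mu=\gamma-q+1$, and hence a priori on $q$, is a fair caveat that the paper's own proof glosses over.
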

\begin{proof} 
Invoke the definition of $G_{\gamma}$, given in \eqref{eq:sol_rep_GH}, the definition of the space $\mathbb{H}^{r}(\Omega)$, given in \eqref{def:Hs}, the fact that $\{ \varphi_{\ell}\}_{\ell \in \mathbb{N}}$ is an orthonormal basis of $L^2(\Omega)$ and the differentiation formula \eqref{eq:G_derivatives} to conclude that
\begin{align}
  \|\partial_t^q G_{\gamma}(t) w \|^2_{\mathbb{H}^r(\Omega)} & = \sum_{k = 1}^{\infty} \lambda_k^r w_k^2 (\partial_t^q E_{\gamma,1}(-\lambda_k^s t^{\gamma}))^2
  \nonumber
   \\
  & = t^{2(\gamma-q)}\sum_{k = 1}^{\infty} \lambda_k^{r+2s} w_k^2 E^2_{\gamma,\gamma-q+1}(-\lambda_k^s t^{\gamma}). 
  \label{eq:aux_sum}
\end{align}
Consequently, employing the estimate \eqref{ML_estimate} we arrive at
\begin{equation}
  \|\partial_t^q G_{\gamma}(t) w \|^2_{\mathbb{H}^r(\Omega)} \lesssim t^{2(\gamma-q)}\sum_{k = 1}^{\infty} \lambda_k^{r+2s} w_k^2  = t^{2(\gamma-q)} \| w \|^2_{\mathbb{H}^{r+2s}(\Omega)}, 
\end{equation}
which yields the first part of \eqref{eq:G_gamma}. The derivation of the second part follows similar arguments: see \eqref{eq:sol_rep_GH} and the differentiation formulas \eqref{eq:G_derivatives2} and \eqref{eq:G_derivatives}.

The estimate \eqref{eq:H_gamma} follows similar arguments. In fact,
\begin{align*}
  \| \partial_t^{q+1} H_\gamma(t) w \|_{\mathbb{H}^r(\Omega)}^2 &= t^{\gamma-2q} \sum_{k=1}^\infty \lambda_k^{r+s} w_k^2 \left[ \lambda_k^s t^\gamma E_{\gamma,\gamma-q+1}(-\lambda_k t^\gamma)^2 \right] \\
  & \lesssim t^{\gamma - 2q} \sum_{k=1}^\infty \lambda_k^{r+s} w_k^2 \frac{ \lambda_k^s t^\gamma}{(1+ \lambda_k t^\gamma)^2}
  \lesssim t^{\gamma - 2q} \sum_{k=1}^\infty \lambda_k^{r+s} w_k^2.
\end{align*}

This concludes the proof.
\end{proof}

To present the following result we define
\begin{equation}
\label{A}
\mathcal{A}(g,h,f) =  \| g \|_{\Hs} + \| h\|_{L^2(\Omega)} + \| f\|_{H^2(0,T;\Hsd)}.
\end{equation}

\begin{theorem}[time regularity: $\gamma \in (1,2)$]
\label{thm:time_regularity}
Assume that $g \in \Hs$, $h \in L^2(\Omega)$ and $f \in H^2(0,T;\Hsd)$. Then, the solution $u$ to problem \eqref{eq:fractional_wave} satisfies
\begin{equation}
\label{eq:reg_sigma}
 \|t^{\rho/2} \partial_t^3 u \|_{L^2(0,T;\Hsd)} \lesssim \mathcal{A}(g,h,f) ,
\end{equation}
where $\rho > 5 - 2 \gamma$. The hidden constant is independent of $t$ and the problem data but blows up as $\gamma \downarrow 1$.
 \end{theorem}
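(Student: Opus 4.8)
The plan is to use the solution representation formula \eqref{eq:solution_to_fractional_wave} and differentiate term by term, treating the contributions of $g$, $h$, and $f$ separately. Since $u = G_\gamma(t)g + H_\gamma(t)h + \int_0^t F_\gamma(t-r)f(r)\diff r$, by linearity it suffices to bound $\|t^{\rho/2}\partial_t^3 (G_\gamma g)\|_{L^2(0,T;\Hsd)}$, the analogous term for $H_\gamma h$, and the term arising from the convolution with $f$, each against the corresponding piece of $\mathcal{A}(g,h,f)$. The first two are immediate from Lemma~\ref{lemma:G_gamma}: applying \eqref{eq:G_gamma} with $r = -2s$ (so that $\mathbb{H}^{r+2s}(\Omega) = L^2(\Omega) \supseteq \Hs$) and $q = 3$ yields $\|\partial_t^3 G_\gamma(t) g\|_{\Hsd} \lesssim t^{\gamma-3}\|g\|_{\Hs}$, and similarly for the $H_\gamma$ term using $q=2$ in the $\partial_t^{q+1}$ estimate. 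Multiplying by $t^{\rho/2}$ and squaring, the integral $\int_0^T t^{\rho + 2\gamma - 6}\diff t$ converges precisely when $\rho > 5 - 2\gamma$, exactly as the heuristic \eqref{eq:weighted_estimates} predicts.

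The genuinely delicate part is the forcing term $w(t) := \int_0^t F_\gamma(t-r) f(r)\diff r$, because differentiating a convolution whose kernel is singular at the origin requires care. First I would differentiate once to move a derivative onto $f$, writing $\partial_t w(t) = F_\gamma(t) f(0) + \int_0^t F_\gamma(t-r)\partial_r f(r)\diff r$ (this is where the assumption $f \in H^2(0,T;\Hsd)$, giving meaning to $f(0)$ and $\partial_t f(0)$, becomes essential). Iterating, $\partial_t^3 w$ will involve boundary terms of the form $\partial_t^{j} F_\gamma(t) f^{(i)}(0)$ together with a remaining convolution $\int_0^t F_\gamma(t-r)\partial_r^3 f$ or, after a further integration by parts in $r$, $\int_0^t \partial_t F_\gamma(t-r)\partial_r^2 f(r)\diff r$. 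The kernel $F_\gamma(t)$ and its derivatives inherit from \eqref{eq:sol_rep_F} and \eqref{eq:G_derivatives3} the behavior $\|\partial_t^q F_\gamma(t)w\|_{\Hsd} \lesssim t^{\gamma - 1 - q}\|w\|_{\Hsd}$ (proved just as in Lemma~\ref{lemma:G_gamma}, using \eqref{eq:derivative_t_ML} and \eqref{ML_estimate}), so each boundary term is $t$-integrable against $t^{\rho/2}$ under the same constraint $\rho > 5 - 2\gamma$, while the leftover convolution is controlled by Lemma~\ref{le:continuity} or Corollary~\ref{co:continuity} together with $\|\partial_r^2 f\|_{L^2(0,T;\Hsd)} \le \|f\|_{H^2(0,T;\Hsd)}$.

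The main obstacle I anticipate is bookkeeping the boundary terms generated by repeated integration by parts of the convolution and confirming that the most singular surviving kernel power is no worse than $t^{\gamma-3}$, so that the weight exponent $\rho > 5-2\gamma$ suffices uniformly across all terms; a naive count could suggest a stronger requirement. The key is to balance how many derivatives are transferred onto $f$ versus left on the kernel: transferring two derivatives onto $f$ (legitimate since $f \in H^2$) leaves at most one derivative plus the intrinsic $t^{\gamma-1}$ decay of $F_\gamma$ on the kernel, which is integrable. I would also verify that the $L^2$-in-time structure, rather than pointwise-in-$t$ bounds, is what rescues the borderline integrability, since the convolution remainder is estimated in $L^2(0,T;\Hsd)$ directly via Young's inequality without ever needing the kernel to be bounded. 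Finally, the blow-up as $\gamma \downarrow 1$ is transparent from the constant $1/\Gamma(\gamma-1)$-type factors appearing in $E_{\gamma,\gamma-1}$ and from the shrinking margin $5-2\gamma \to 3$ in the integrability threshold, so no separate argument is needed for that assertion.
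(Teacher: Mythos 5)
Your proposal is correct and follows essentially the same route as the paper's proof: superposition into the three cases, Lemma~\ref{lemma:G_gamma} for the $g$- and $h$-contributions, and, for the forcing term, transferring exactly two derivatives onto $f$ so that the surviving convolution kernel is $t^{\gamma-2}E_{\gamma,\gamma-1}(-\lambda_k^s t^{\gamma})$ (\ie $\partial_t F_\gamma$), with the boundary terms $t^{\gamma-3}E_{\gamma,\gamma-2}f_k(0)$ and $t^{\gamma-2}E_{\gamma,\gamma-1}\partial_t f_k(0)$ absorbed by $\int_0^T t^{\rho+2\gamma-6}\diff t<\infty$ for $\rho>5-2\gamma$ and the remaining convolution controlled via Lemma~\ref{le:continuity} since $r^{\gamma-2}\in L^1(0,T)$. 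The one slip is the parameter in Lemma~\ref{lemma:G_gamma}: to land in $\Hsd=\mathbb{H}^{-s}(\Omega)$ you must take $r=-s$ (so that $\mathbb{H}^{r+2s}(\Omega)=\Hs$), not $r=-2s$, which would only bound the weaker $\mathbb{H}^{-2s}(\Omega)$-norm; the conclusion $\|\partial_t^3 G_\gamma(t)g\|_{\Hsd}\lesssim t^{\gamma-3}\|g\|_{\Hs}$ that you state is nonetheless the correct one.
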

 
\begin{proof}
We proceed in three steps and apply the superposition principle.

\noindent \boxed{\emph{Step 1. Case $h \equiv 0$ and $f \equiv 0$.}} In this case, the solution to problem \eqref{eq:fractional_wave} is $u(x',t) = G_{\gamma}(t)g(x')$. An application of the estimate \eqref{eq:G_gamma} of Lemma \ref{lemma:G_gamma} reveals that 
\[
 \| \partial_t^3 u \|_{\Hsd} \lesssim t^{\gamma-3}\| g\|_{\Hs} 
\]
for $t \in (0,T]$, whence \eqref{eq:reg_sigma} follows.

\noindent \boxed{\emph{Step 2. Case $g \equiv 0$ and $f \equiv 0$.}} If this is the case, the solution to problem \eqref{eq:fractional_wave} is $u(x',t) = H_{\gamma}(t)g(x')$, where $H_{\gamma}$ is defined as in \eqref{eq:sol_rep_GH}. We thus apply the estimate \eqref{eq:H_gamma} of Lemma \ref{lemma:G_gamma} to obtain, for $t \in (0,T]$, that $\| \partial_t^3 u \|_{\Hsd} \lesssim t^{\gamma/2-2}\| h \|_{L^2(\Omega)}$. This immediately yields \eqref{eq:reg_sigma}.

\noindent \boxed{\emph{Step 3. Case $g \equiv 0$ and $h \equiv 0$.}}  In this case, we have that $u = \sum_{k} u_k \varphi_k$, where $u_k$ is as in formula \eqref{eq:u_k_rs} with $g_k = h_k = 0$:
\[
 u_k(t) = \int_{0}^t (t-r)^{\gamma-1}E_{\gamma,\gamma}(-\lambda_k^s(t-r)^{\gamma})f_k(r)\diff r, \qquad k \in \mathbb{N}.
\]
The first--order derivative of $u_k$ is given in \eqref{eq:du_k_rs}: $\partial_t u_k(t) = \int_{0}^t (t-r)^{\gamma-2} E_{\gamma,\gamma-1}(-\lambda_k^s(t-r)^{\gamma} ) f_k(r) \diff r$; notice that $g_k = h_k = 0$. On the basis of this formula, a simple change of variable and differentiation allow us to conclude that
\[
\partial_t^2 u_k(t) = t^{\gamma-2} E_{\gamma,\gamma-1}(-\lambda_k^s t^{\gamma}) f_k(0) + \int_{0}^t r^{\gamma-2} E_{\gamma,\gamma-1}(-\lambda_k^s r^{\gamma})\partial_t f_k(t-r) \diff r.
\]
Differentiating once more, and using formula \eqref{eq:derivative_t_ML}, that yields
\[
 \partial_t \left ( t^{\gamma-2}E_{\gamma,\gamma-1}(-\lambda_k^s t^{\gamma}) \right) = t^{\gamma-3}E_{\gamma,\gamma-2}(-\lambda_k^s t^{\gamma}),
\]
we obtain that
\begin{multline}
\partial_t^3 u_k(t) = t^{\gamma-3} E_{\gamma,\gamma-2}(-\lambda_k^s t^{\gamma})f_k(0)   + t^{\gamma-2} E_{\gamma,\gamma-1}(-\lambda_k^s t^{\gamma}) \partial_t f_k(0) 
\\
+\int_0^t r^{\gamma-2}E_{\gamma,\gamma-1}(-\lambda_k^s r^{\gamma}) \partial_t^2 f_k(t-r) \diff r. 
\label{eq:3rd_derivative}
\end{multline}
Since $\rho > 5-2\gamma$ yields $\int_{0}^T r^{\rho + 2\gamma - 6} \diff r < \infty$, the first and second term on the right--hand side of the previous expression lead to \eqref{eq:reg_sigma}. To estimate the third term we first use that $\rho>0$, invoke then estimate \eqref{ML_estimate} and then Lemma \ref{le:continuity} to conclude that 
\begin{align*}
  \int_0^T t^\rho \left( \int_0^t r^{\gamma-2} E_{\gamma,\gamma-1}(-\lambda_k^s r^\gamma) \partial_t^2 f_k(t-r) \diff r \right)^2 \diff t &\lesssim
  \left\| r^{\gamma - 2} \star \partial_t^2 f_k \right\|_{L^2(0,T)}^2 \\
  &\lesssim \| \partial_t^2 f_k \|_{L^2(0,T)}^2,
\end{align*}
where we also used that, since $\gamma \in (1,2)$, $r^{\gamma-2} \in L^1(0,T)$ and $\| r^{\gamma -2 } \|_{L^1(0,T)}= \tfrac{1}{\gamma-1}T^{\gamma-1}$. This concludes the proof.
\end{proof}

On the basis of Theorem \ref{thm:CS_NOT_ST} we immediately conclude the following result.

\begin{corollary}[time regularity for the extension: $\gamma \in (1,2)$]
\label{thm:time_regularity_extension}
Assume that $g \in \Hs$, $h \in L^2(\Omega)$ and $f \in H^2(0,T;\Hsd)$. Then, the solution $\ue$ to problem \eqref{eq:wave_alpha_extension}--\eqref{eq:initial_cond} satisfies
\begin{equation}
\label{eq:reg_sigma_extension}
 \|t^{\rho/2} \tr \partial_t^3 \ue \|_{L^2(0,T;\Hsd)} \lesssim \mathcal{A}(g,h,f) ,
\end{equation}
where $\rho > 5 - 2 \gamma$. The hidden constant is independent of $t$ and the problem data, but blows up as $\gamma \downarrow 1$.
 \end{corollary}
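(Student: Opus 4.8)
The plan is to reduce the claimed estimate for $\ue$ to the already-established estimate for $u$ in Theorem \ref{thm:time_regularity}, exploiting the fact that the trace operator acts only in the spatial variables and therefore commutes with differentiation in $t$. First I would invoke the Caffarelli--Silvestre correspondence of Theorem \ref{thm:CS_NOT_ST}: under the stated hypotheses on $g$, $h$ and $f$, the unique weak solution $\ue$ of \eqref{eq:wave_alpha_extension}--\eqref{eq:initial_cond} is given by the separated representation \eqref{eq:exactforms}, and its trace satisfies $\tr \ue = u$, where $u$ is the unique weak solution of \eqref{eq:fractional_wave}.

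The crux of the argument is the identity $\tr \partial_t^3 \ue = \partial_t^3 u$. Because \eqref{eq:exactforms} has the product structure $\ue(x,t) = \sum_{k} u_k(t)\,\varphi_k(x')\,\psi_k(y)$, in which $\varphi_k$ and $\psi_k$ are independent of $t$ while $\psi_k(0)=1$, differentiating termwise in time and evaluating at $y=0$ gives
\[
 \tr \partial_t^3 \ue(x',t) = \sum_{k\geq1} \partial_t^3 u_k(t)\,\varphi_k(x') = \partial_t^3 u(x',t).
\]
Consequently the two weighted Bochner norms coincide,
\[
 \|t^{\rho/2} \tr \partial_t^3 \ue\|_{L^2(0,T;\Hsd)} = \|t^{\rho/2} \partial_t^3 u\|_{L^2(0,T;\Hsd)},
\]
and the desired bound \eqref{eq:reg_sigma_extension} follows at once by applying \eqref{eq:reg_sigma} of Theorem \ref{thm:time_regularity}, which furnishes the right-hand side $\mathcal{A}(g,h,f)$ for every $\rho > 5-2\gamma$ and inherits the blow-up as $\gamma \downarrow 1$.

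The only point requiring care, and hence the main obstacle, is the rigorous justification of the commutation $\tr \partial_t^3 \ue = \partial_t^3 \tr \ue$: one must verify that termwise time-differentiation of the series \eqref{eq:exactforms} is legitimate and that it may be interchanged with the trace in the relevant weighted space. This is a bookkeeping issue rather than an analytic difficulty, and it is settled by the same Mittag--Leffler decay estimates (via \eqref{ML_estimate} and \eqref{eq:G_derivatives}) and convergence bounds already used in the proofs of Theorem \ref{thm:wp_gamma} and Theorem \ref{thm:time_regularity}, together with the surjectivity and boundedness of $\tr$ recorded in Section \ref{sub:extended}.
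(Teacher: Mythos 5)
Your proposal is correct and follows the same route as the paper, which deduces the corollary directly from Theorem \ref{thm:CS_NOT_ST} (so that $\tr\ue=u$) together with the estimate \eqref{eq:reg_sigma} of Theorem \ref{thm:time_regularity}. The paper treats the interchange of the trace with $\partial_t^3$ as immediate, whereas you flag and justify it via the separated series \eqref{eq:exactforms}; this is a harmless extra precaution, not a different argument.
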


\subsection{Space regularity}
\label{sub:space_regularity}

We now proceed to analyze the space regularity properties of the solution $\ue$ of problem \eqref{eq:wave_alpha_extension}--\eqref{eq:initial_cond}. To accomplish this task, we define the weight
\begin{equation}
\label{eq:weight}
\omega_{\beta,\theta}(y) = y^{\beta}e^{\theta y}, \qquad 0 \leq \theta < 2 \sqrt{\lambda_1},
\end{equation}
where $\beta \in \R $ will be specified later. With this weight at hand, we define the norm
\begin{equation}
 \label{eq:weighted_norm}
 \| v \|_{L^2(\omega_{\beta,\theta},\C)} 
:= \left( \int_0^{\infty} \int_{\Omega} \omega_{\beta,\theta}(y) 
          |v(x',y)|^2 \diff x' \diff y
 \right)^{\frac{1}{2}}.
\end{equation}

In view of formulas \eqref{eq:exactforms} and \eqref{eq:psik_representation} we observe that, in order to derive regularity properties of $\ue$ it is essential to bound certain weighted integrals of the derivatives of the function $\psi(z):= c_s z^ s K_s(z)$. To accomplish this task, we define, for $\beta$, $\delta \in \mathbb{R}$, $\ell \in \mathbb N$, and $\lambda > 0$
\begin{equation}
\label{eq:defofbigPhi}
\Phi(\delta,\theta,\lambda) 
= 
\int_{ 0 }^{ \infty } z^{ \delta }e^{\theta z/\sqrt{\lambda}} \left| \psi(z) \right|^2 \diff z
\end{equation}
and
\begin{equation}
\label{eq:defofbigPsi}
\Psi_\ell(\beta,\theta,\lambda) 
= \int_{ 0 }^{ \infty } z^{\beta + 2\ell}e^{\theta z/\sqrt{\lambda}} \left| \frac{\diff^\ell}{\diff z^\ell} \psi(z) \right|^2 \diff z.
\end{equation}
The parameter $\theta$ is such that \eqref{eq:weight} holds.

The integrals $\Phi(\delta,\theta,\lambda)$ and $\Psi_\ell(\beta,\theta,\lambda)$ are bounded as follows.

\begin{proposition}[bounds on $\Phi$ and $\Psi_\ell$]
\label{lem:bound_Psi_finite_interval}
Let $\delta > -1$, $\beta > -1 - 4s$, $\ell \in \mathbb{N}$. If $\theta$ is such that $0 \leq \theta < 2 \sqrt{\lambda_1}$ and $\lambda \geq \lambda_1$, then
\begin{equation}
\label{eq:bound_on_Phi}
\Phi(\delta,\theta,\lambda) \lesssim 1,
\end{equation}
where the hidden constant is independent of $\lambda$. In addition, there exists $\kappa > 1$ such that 
\begin{equation}
\label{eq:bound_on_Psi}
\Psi_\ell(\beta,\theta,\lambda) \lesssim \kappa^{2\ell} (\ell!)^2,
\end{equation}
where the hidden constant is independent of $\ell$ and $\lambda$.
\end{proposition}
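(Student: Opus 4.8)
The plan is to reduce everything to the single ``worst case'' $\lambda=\lambda_1$ and then analyze $\psi(z)=c_s z^s K_s(z)$ by splitting $(0,\infty)=(0,1)\cup(1,\infty)$, handling the branch point at $z=0$ and the exponential decay as $z\to\infty$ separately. Since $\lambda\ge\lambda_1$ and $\theta\ge0$ we have $e^{\theta z/\sqrt{\lambda}}\le e^{\tau z}$ pointwise with $\tau:=\theta/\sqrt{\lambda_1}\in[0,2)$, so it suffices to bound $\Phi$ and $\Psi_\ell$ with $e^{\tau z}$ in place of $e^{\theta z/\sqrt{\lambda}}$; this renders the integrals independent of $\lambda$ and yields the claimed uniformity for free. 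I would record two facts about $\psi$. First, $\psi$ continues analytically to a sector about $(0,\infty)$ and, from the uniform asymptotics of $K_s$, satisfies $|\psi(w)|\lesssim|w|^{s-1/2}e^{-\Re w}$ there. Second, near $0$ the Frobenius structure of the rescaling of \eqref{eq:psik}, namely $z\psi''+\alpha\psi'-z\psi=0$, gives the convergent decomposition $\psi=\psi_{\mathrm{an}}+z^{2s}g$, where $\psi_{\mathrm{an}}$ and $g$ are even entire functions with $\psi_{\mathrm{an}}(0)=1$. The bound \eqref{eq:bound_on_Phi} is then immediate: on $(0,1)$ use $|\psi|^2\lesssim1$ together with $\delta>-1$, and on $(1,\infty)$ use $|\psi|^2\lesssim z^{2s-1}e^{-2z}$ against $e^{\tau z}$, whose product is $e^{-(2-\tau)z}$ with $2-\tau>0$.

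For \eqref{eq:bound_on_Psi} (the substantive estimate, for $\ell\ge1$) I would again split at $z=1$. On the tail the key device is a Cauchy estimate on disks of radius $\rho z$: for $z\ge1$ and $\rho$ small these disks stay in the sector where the decay bound holds, so $\Re w\ge(1-\rho)z$ and $|w|\le(1+\rho)z$ give $|\psi^{(\ell)}(z)|\le\tfrac{\ell!}{(\rho z)^\ell}\sup_{|w-z|=\rho z}|\psi(w)|\lesssim\tfrac{\ell!}{\rho^\ell}z^{s-1/2-\ell}e^{-(1-\rho)z}$. Squaring and inserting the weight, the factor $z^{2\ell}$ is cancelled exactly by $z^{-2\ell}$, leaving $\tfrac{(\ell!)^2}{\rho^{2\ell}}\int_1^\infty z^{\beta+2s-1}e^{-(2(1-\rho)-\tau)z}\,\diff z$, which converges to an $\ell$-independent constant once $\rho$ is chosen so small that $2(1-\rho)>\tau$. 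Thus the tail is $\lesssim\rho^{-2\ell}(\ell!)^2$, of the required form $\kappa^{2\ell}(\ell!)^2$; note the $(\ell!)^2$ originates from the Cauchy estimate while the polynomial weight is absorbed by $z^{-2\ell}$, so no spurious $(2\ell)!$ appears.

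On $(0,1)$ the exponential weight is harmless and I would estimate $\psi^{(\ell)}$ through $\psi=\psi_{\mathrm{an}}+z^{2s}g$. Being entire of exponential type, $\psi_{\mathrm{an}}$ and $g$ have derivatives growing at most geometrically on $[0,1]$, say $|\psi_{\mathrm{an}}^{(\ell)}|,|g^{(\ell)}|\lesssim M^\ell$; since $\psi_{\mathrm{an}}$ is even its $\ell$-th derivative vanishes to order at least one at $0$ when $\ell$ is odd, so in every case $\int_0^1 z^{\beta+2\ell}|\psi_{\mathrm{an}}^{(\ell)}|^2\,\diff z\lesssim M^{2\ell}$ (the exponent $\beta+2\ell$, or $\beta+2\ell+2$ for odd $\ell$, exceeds $-1$ because $\beta>-1-4s>-5$). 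For the singular part, Leibniz' rule gives $\frac{\diff^\ell}{\diff z^\ell}(z^{2s}g)=\sum_{i=0}^\ell\binom{\ell}{i}\frac{\Gamma(2s+1)}{\Gamma(2s+1-i)}z^{2s-i}g^{(\ell-i)}(z)$; using $|\Gamma(2s+1)/\Gamma(2s+1-i)|=\prod_{j=0}^{i-1}|2s-j|\le(i+1)!$, the bound $|g^{(\ell-i)}|\lesssim M^{\ell-i}$, and $z^{2s-i}\le z^{2s-\ell}$ for $z\le1$, the sum is bounded by $z^{2s-\ell}\sum_{i=0}^\ell\binom{\ell}{i}(i+1)!\,M^{\ell-i}\lesssim z^{2s-\ell}\,\kappa_1^{\ell}\,\ell!$, whence $|(z^{2s}g)^{(\ell)}|\lesssim\kappa_1^\ell\,\ell!\,z^{2s-\ell}$. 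Consequently the singular part contributes $\lesssim\kappa_1^{2\ell}(\ell!)^2\int_0^1 z^{\beta+4s}\,\diff z$, which is finite precisely because $\beta>-1-4s$. Combining the two regions gives \eqref{eq:bound_on_Psi}.

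The main obstacle is the $z\downarrow0$ analysis of $\Psi_\ell$: one must simultaneously produce the correct $(\ell!)^2$ growth, which hinges on the exact interplay of the $\prod_{j<i}|2s-j|\sim\ell!$ factors with the binomial and derivative factors in Leibniz' formula, and match it against the singularity order $2s-\ell$ so that integrability holds under the sharp threshold $\beta>-1-4s$ and no stronger condition. A secondary technical point is justifying the sector analyticity and the uniform bound $|\psi(w)|\lesssim|w|^{s-1/2}e^{-\Re w}$ used in the tail Cauchy estimate, which I would extract from the standard analytic continuation and asymptotics of $K_s$.
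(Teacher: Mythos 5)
Your proof is correct, but it takes a genuinely different route from the paper, which does not prove this proposition internally at all: its ``proof'' is the single citation \cite[Lemma 4.6]{BMNOSS:17}, where the bound is obtained by explicit Bessel--function calculus --- an induction on $\ell$ driven by recurrences of the type $\frac{\diff}{\diff z}\bigl(z^{s}K_s(z)\bigr)=-z^{s}K_{s-1}(z)=-z^{2s-1}\cdot z^{1-s}K_{1-s}(z)$, which produces a pointwise bound on $\psi^{(\ell)}$ with the factors $\prod_{j}(2s-j)$ supplying the $\ell!$, combined with the asymptotics of $K_\nu$ at $0$ and $\infty$. You instead generate the factorial in the tail from Cauchy's formula on disks of radius $\rho z$ (which cancels the weight $z^{2\ell}$ exactly and isolates the condition $2(1-\rho)>\theta/\sqrt{\lambda_1}$, available since $\theta<2\sqrt{\lambda_1}$), and near the origin from the Leibniz expansion of $(z^{2s}g)^{(\ell)}$, where $\prod_{j<i}|2s-j|\le(i+1)!$ gives the $\ell!$ and the surviving singularity $z^{2(2s-\ell)}$ against the weight $z^{\beta+2\ell}$ reproduces precisely the threshold $\beta>-1-4s$; the reduction $e^{\theta z/\sqrt{\lambda}}\le e^{\theta z/\sqrt{\lambda_1}}$ gives the $\lambda$--uniformity, and the parity bookkeeping for $\psi_{\mathrm{an}}$ makes the regular part integrable for every $\ell\ge1$. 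All of these steps check out. Two points you should make explicit: (i) the decomposition $\psi=\psi_{\mathrm{an}}+z^{2s}g$ with $\psi_{\mathrm{an}},g$ even and entire \emph{of exponential type} is best read off the series $K_s=\tfrac{\pi}{2}(I_{-s}-I_s)/\sin(\pi s)$ rather than abstract Frobenius theory --- this both justifies the geometric derivative bounds $|\psi_{\mathrm{an}}^{(\ell)}|,|g^{(\ell)}|\lesssim M^{\ell}$ via the Taylor--coefficient estimate $|c_n|\lesssim M^{n}/n!$ and disposes of the resonant case $s=\tfrac12$, where $2s\in\mathbb{Z}$ and Frobenius theory alone would permit a logarithm (none occurs, since $\psi(z)=e^{-z}$ there); (ii) your argument covers $\ell\ge1$ only, which is all the paper uses, since $\Psi_0=\Phi$ requires $\beta>-1$. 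What your route buys is independence from special--function identities (only the sectorial asymptotics of $K_s$ for $|w|$ bounded away from $0$ are needed); what the cited recurrence route buys is an explicit pointwise bound on $\psi^{(\ell)}$ that can be reused elsewhere.
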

\begin{proof}
See \cite[Lemma 4.6]{BMNOSS:17}
\end{proof}

On the basis of Proposition~\ref{lem:bound_Psi_finite_interval} we can give pointwise, in time, bounds for $\ue$.

\begin{theorem}[pointwise bounds]
\label{thm:pointwisebounds}
Le $\ue$ solve \eqref{eq:wave_alpha_extension}--\eqref{eq:initial_cond} for $s\in (0,1)$ and $\gamma \in (1,2]$. Let $0 \leq \sigma < s$ and $0 \leq \nu < 1+s $. Then, there exists $\kappa > 1$ such that the following bounds hold:
\begin{align}
\label{eq:partial_y_l+1}
  \| \partial_y^{\ell+1} \ue(\cdot,t) \|_{L^2(\omega_{\alpha+2\ell-2\sigma,\theta}, \C)}^2 &\lesssim \kappa^{2(\ell+1)} (\ell+1)!^2 \| u(\cdot,t) \|_{\mathbb{H}^{\sigma+s}(\Omega)}^2, \\
\label{eq:nablaxpartial_y_l+1}
  \| \nabla_{x'} \partial_y^{\ell+1} \ue(\cdot,t) \|_{L^2(\omega_{\alpha+2(\ell+1)-2\nu,\theta},\C)}^2 &\lesssim \kappa^{2(\ell+1)} (\ell+1)!^2 \| u(\cdot,t) \|_{\mathbb{H}^{\nu+s}(\Omega)}^2, \\
\label{eq:deltaxpartial_y_l+1}
   \|  \mathcal{L}_{x'}  \partial_y^{\ell+1} \ue(\cdot,t) \|^2_{L^2(\omega_{\alpha+2(\ell+1)-2\nu,\theta},\C)} 
   &\lesssim \kappa^{2(\ell+1)} (\ell+1 )!^2 \| u(\cdot,t)\|_{\mathbb{H}^{1 + \nu + s}(\Omega)}^2.
\end{align}
In all inequalities the hidden constants are independent of $\ue$ and $\ell$.
\end{theorem}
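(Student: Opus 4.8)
The plan is to combine the separated-variables representation \eqref{eq:exactforms} of $\ue$ with the scaling $\psi_k(y)=\psi(\sqrt{\lambda_k}\,y)$, where $\psi(z):=c_s z^s K_s(z)$, so that $\partial_y^{\ell+1}\psi_k(y)=\lambda_k^{(\ell+1)/2}\psi^{(\ell+1)}(\sqrt{\lambda_k}\,y)$, and thereby reduce every weighted integral in $y$ to the quantities $\Psi_{\ell+1}$ estimated in Proposition~\ref{lem:bound_Psi_finite_interval}. Differentiation of \eqref{eq:exactforms} term by term and the interchange of summation with integration are legitimized a posteriori by the finiteness of the bounds obtained.

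For \eqref{eq:partial_y_l+1} I would differentiate \eqref{eq:exactforms} in $y$ and integrate in $x'$; since $\{\varphi_k\}$ is orthonormal in $L^2(\Omega)$ the spatial integral decouples, giving
\[
  \|\partial_y^{\ell+1}\ue(\cdot,t)\|^2_{L^2(\omega_{\alpha+2\ell-2\sigma,\theta},\C)}
  = \sum_{k\ge1} u_k(t)^2 \int_0^\infty y^{\alpha+2\ell-2\sigma}e^{\theta y}\,|\partial_y^{\ell+1}\psi_k(y)|^2\diff y.
\]
The substitution $z=\sqrt{\lambda_k}\,y$ converts the inner integral into $\lambda_k^{s+\sigma}\Psi_{\ell+1}(\alpha-2\sigma-2,\theta,\lambda_k)$, where the exponent of $\lambda_k$ is computed from $\alpha=1-2s$. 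The admissibility condition of Proposition~\ref{lem:bound_Psi_finite_interval}, namely $\alpha-2\sigma-2>-1-4s$, is exactly $\sigma<s$; invoking it bounds the integral by $\lambda_k^{s+\sigma}\kappa^{2(\ell+1)}((\ell+1)!)^2$, and the remaining sum $\sum_k\lambda_k^{s+\sigma}u_k(t)^2$ is, by definition \eqref{def:Hs}, $\|u(\cdot,t)\|^2_{\mathbb{H}^{\sigma+s}(\Omega)}$. Estimate \eqref{eq:deltaxpartial_y_l+1} is entirely analogous: using $\mathcal{L}_{x'}\varphi_k=\lambda_k\varphi_k$, orthonormality again decouples the integrals and brings out a factor $\lambda_k^2$, while the same substitution against the weight $\omega_{\alpha+2(\ell+1)-2\nu,\theta}$ produces $\lambda_k^{s+\nu-1}\Psi_{\ell+1}(\alpha-2\nu,\theta,\lambda_k)$; here the admissibility requirement $\alpha-2\nu>-1-4s$ reads $\nu<1+s$, and the sum collapses to $\sum_k\lambda_k^{\,2+s+\nu-1}u_k(t)^2=\|u(\cdot,t)\|^2_{\mathbb{H}^{1+\nu+s}(\Omega)}$.

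The one step that genuinely differs is \eqref{eq:nablaxpartial_y_l+1}, and this is the main obstacle: the vector fields $\nabla_{x'}\varphi_k$ are not $L^2$-orthogonal, so the spatial integral does not decouple termwise. I would circumvent this by estimating, for each fixed $y$, the Dirichlet energy through the $\mathcal{L}$-energy. Uniform ellipticity of $A$ together with $c\ge0$ yields $\|\nabla_{x'}v\|^2_{L^2(\Omega)}\lesssim\int_\Omega(A\nabla_{x'}v\cdot\nabla_{x'}v+c\,v^2)=\langle\mathcal{L}v,v\rangle$, and in the spectral representation $\langle\mathcal{L}v,v\rangle=\sum_k\lambda_k v_k^2$ the modes are orthogonal. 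Applying this with $v=\partial_y^{\ell+1}\ue(\cdot,y,t)$ gives the pointwise-in-$y$ bound $\sum_k\lambda_k u_k(t)^2|\partial_y^{\ell+1}\psi_k(y)|^2$, after which the same scaling substitution and Proposition~\ref{lem:bound_Psi_finite_interval} (again under $\nu<1+s$) close the estimate, the surviving sum being $\sum_k\lambda_k^{\,1+s+\nu-1}u_k(t)^2=\|u(\cdot,t)\|^2_{\mathbb{H}^{\nu+s}(\Omega)}$. Apart from this decoupling trick, the only care needed is the bookkeeping of the powers of $\lambda_k$ generated by the scaling and the verification, in each of the three cases, that the first argument of $\Psi_{\ell+1}$ lies in the admissible range $(-1-4s,\infty)$, which is precisely what forces the hypotheses $\sigma<s$ and $\nu<1+s$.
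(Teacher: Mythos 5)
Your proposal is correct and follows essentially the same route as the paper: termwise differentiation of the representation \eqref{eq:exactforms}, decoupling by orthogonality of $\{\varphi_k\}$, the substitution $z=\sqrt{\lambda_k}\,y$, and Proposition~\ref{lem:bound_Psi_finite_interval}, with identical exponent bookkeeping and the same identification of the constraints $\sigma<s$ and $\nu<1+s$. The only divergence is in \eqref{eq:nablaxpartial_y_l+1}, where you pass through the $\mathcal{L}$-energy via uniform ellipticity to decouple the gradient term; the paper writes that decoupling as an exact identity (which is only exact for the energy inner product when $A\neq I$ or $c\neq 0$), so your treatment is in fact the more careful one, but the argument is the same.
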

\begin{proof}
We begin with the proof of \eqref{eq:partial_y_l+1}. We invoke the representation formula \eqref{eq:exactforms} and use the fact that $\{ \varphi_k \}_{k=1}^{\infty}$ is an orthonormal basis of $L^2(\Omega)$ to arrive at
\[
\|\partial_{y}^{\ell+1} \ue(\cdot,t) \|^2_{L^2(\omega_{\alpha + 2 \ell -2\sigma,\theta},\C)}  
= \sum_{k=1}^{\infty} u_k^2(t)
  \int_0^\infty y^{\alpha + 2 \ell -2\sigma}e^{\theta y}
\left|\frac{\diff^{\ell+1} }{\diff y^{\ell+1}} \psi_k (y) \right|^2 \diff y.
\]
Consider the change of variable $z = \sqrt{\lambda_k} y$ on the previous integral. Thus, in view of definition \eqref{eq:defofbigPsi}, we can apply the estimate \eqref{eq:bound_on_Psi} with $\beta = \alpha - 2\sigma -2 = 1-2s -2\sigma -2 > -1-4s$ to conclude that
\begin{align*}
\nonumber
\|\partial_{y}^{\ell+1} \ue(\cdot,t) \|^2_{L^2(\omega_{\alpha + 2 \ell -2\sigma,\theta},\C)}  
& = 
\sum_{k=1}^{\infty}
\lambda_k^{ (\ell + 1) - 
\left(\frac{\alpha+2\ell-2\sigma}{2}\right) -\frac12}
u_k^2(t) 
\Psi_{\ell+1}(\alpha-2\sigma-2,\theta,\lambda_k)
\\
& \lesssim (\ell+1)!^2 \kappa^{2(\ell+1)}\sum_{k=1}^{\infty} \lambda_k^{\sigma + s} u_k^2(t),
\end{align*}
as we intended to show.

Similar arguments reveal that
\begin{align*}
  & \|  \nabla_{x'} \partial_y^{\ell+1} \ue( \cdot,t) \|^2_{L^2(\omega_{\alpha+2(\ell+1)-2\nu,\theta},\C)} 
  \\
  &= \sum_{k=1}^{\infty} u_k^2(t) \lambda_k \int_0^\infty y^{\alpha+2(\ell+1)-2\nu} 
 e^{\theta y}\left|\frac{\diff^{\ell+1} }{\diff y^{\ell+1}} \psi_k (y) \right|^2 \diff y
  \\
  &= \sum_{k=1}^\infty u_k^2(t) \lambda_k^{1 + (\ell+1) - \left( \frac{\alpha+2(\ell+1)-2\nu}{2} \right) -\frac12} \Psi_{\ell+1}(\alpha-2\nu,\theta,\lambda_k),
\end{align*}
Since $\alpha-2\nu > 1-2s - 2 -2s = -1-4s$ we can thus apply the estimate \eqref{eq:bound_on_Psi} with 
$\beta = \alpha - 2\nu$ to obtain that
\begin{equation*}
   \|  \nabla_{x'}  \partial_y^{\ell+1} \ue \|^2_{L^2(\omega_{\alpha+2(\ell+1)-2\nu,\theta},\C)} 
   \lesssim (\ell+1 )!^2 \kappa^{2(\ell+1)} \sum_{k=1}^{\infty} \lambda_k^{\nu + s} u_k^2(t).
\end{equation*}

Finally, applying the same arguments that led to \eqref{eq:partial_y_l+1} and \eqref{eq:nablaxpartial_y_l+1} we obtain that
\begin{equation*}
   \|  \mathcal{L}_{x'}  \partial_y^{\ell+1} \ue \|^2_{L^2(\omega_{\alpha+2(\ell+1)-2\nu,\theta},\C)} 
   \lesssim \kappa^{2(\ell+1)} (\ell+1 )!^2 \sum_{k=1}^{\infty} \lambda_k^{1 + \nu + s} u_k^2(t).
\end{equation*}

This concludes the proof.
\end{proof}

As an application of this result we can obtain spatial regularity for $\ue$. The results below show the spatial analyticity of $\ue$ with respect to the extended variable $y \in (0,\infty)$. We obtain that $\ue$ belongs to countably normed, power--exponentially weighted Bochner spaces of analytic functions with respect to $y$, taking values in spaces $\mathbb{H}^r(\Omega)$.

Let us first focus on the case $\gamma \in (1,2)$.

\begin{corollary}[space regularity, $\gamma \in (1,2)$]
\label{TH:regularity}
Let $\ue$ solve \eqref{eq:wave_alpha_extension}--\eqref{eq:initial_cond} for $s \in (0,1)$ and $\gamma \in (1,2)$. Let $0 \leq \sigma < s$ and $0 \leq \nu < 1+s $. Let $0 < \mu \ll 1$ be arbitrary. Then, there exists $\kappa > 1$ such that we have that
\begin{multline}
\| \partial_{y}^{\ell +1 } \ue \|_{L^2(0,T;L^2(\omega_{\alpha + 2 \ell -2\sigma,\theta},\C))}^2  
\lesssim  \kappa^{2(\ell+1)}(\ell+1)!^2 \big(
\| g \|^2_{\mathbb{H}^{\sigma + s}(\Omega)} + 
\| h \|^2_{\mathbb{H}^{\sigma}(\Omega)} 
\\
+ 
\| f \|_{L^2(0,T;\mathbb{H}^{\sigma - s + 2 \mu s}(\Omega))}^2 \big),
\label{eq:reg_in_y_gamma_1} 
\end{multline} 
\begin{multline}
\| \nabla_{x'} \partial_{y}^{\ell +1 } \ue \|_{L^2(0,T;L^2(\omega_{\alpha + 2 (\ell+1) -2\nu,\theta},\C))}^2  
\lesssim  \kappa^{2(\ell+1)}(\ell+1)!^2 \big(
\| g \|^2_{\mathbb{H}^{\nu + s}(\Omega)} + 
\| h \|^2_{\mathbb{H}^{\nu}(\Omega)} 
\\
+ 
\| f \|_{L^2(0,T;\mathbb{H}^{\nu - s + 2 \mu s}(\Omega))}^2 \big).
\label{eq:reg_in_y_gamma_2} 
\end{multline} 
and
\begin{multline}
\| \mathcal{L}_{x'} \partial_{y}^{\ell +1 } \ue \|_{L^2(0,T;L^2(\omega_{\alpha + 2 (\ell+1) -2\nu,\theta},\C))}^2  
\lesssim  \kappa^{2(\ell+1)}(\ell+1)!^2 \big(
\| g \|^2_{\mathbb{H}^{1+\nu + s}(\Omega)} \\ + 
\| h \|^2_{\mathbb{H}^{1+\nu}(\Omega)} 
+ 
\| f \|_{L^2(0,T;\mathbb{H}^{1+\nu - s + 2 \mu s}(\Omega))}^2 \big).
\label{eq:reg_in_y_gamma_3} 
\end{multline} 
\end{corollary}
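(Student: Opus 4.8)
The plan is to reduce the three claimed space--time bounds to the pointwise--in--time estimates of Theorem~\ref{thm:pointwisebounds} by integration in $t$, and then to control the resulting $L^2(0,T;\mathbb{H}^r(\Omega))$--norms of $u$ through the solution representation formula \eqref{eq:solution_to_fractional_wave}. Since the factors $\kappa^{2(\ell+1)}(\ell+1)!^2$ appearing on the right--hand sides of \eqref{eq:partial_y_l+1}--\eqref{eq:deltaxpartial_y_l+1} are independent of $t$, integrating those inequalities over $(0,T)$ shows that \eqref{eq:reg_in_y_gamma_1}, \eqref{eq:reg_in_y_gamma_2} and \eqref{eq:reg_in_y_gamma_3} follow once $\| u \|_{L^2(0,T;\mathbb{H}^{\sigma+s}(\Omega))}$, $\| u \|_{L^2(0,T;\mathbb{H}^{\nu+s}(\Omega))}$ and $\| u \|_{L^2(0,T;\mathbb{H}^{1+\nu+s}(\Omega))}$ are controlled by the data. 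It therefore suffices to prove a single family of bounds of the form $\| u \|_{L^2(0,T;\mathbb{H}^r(\Omega))} \lesssim \| g \|_{\mathbb{H}^r(\Omega)} + \| h \|_{\mathbb{H}^{r-s}(\Omega)} + \| f \|_{L^2(0,T;\mathbb{H}^{r-2s+2\mu s}(\Omega))}$ and then specialize $r$ to $\sigma+s$, $\nu+s$ and $1+\nu+s$.

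To obtain this bound I would use \eqref{eq:solution_to_fractional_wave} and the superposition principle, treating the three solution operators \eqref{eq:sol_rep_GH}--\eqref{eq:sol_rep_F} separately in the spectral basis $\{\varphi_k\}$. For the initial datum $g$, the uniform bound $|E_{\gamma,1}(-\lambda_k^s t^\gamma)|\lesssim 1$ coming from \eqref{ML_estimate} gives $\| G_\gamma(t) g \|_{\mathbb{H}^r(\Omega)} \lesssim \| g \|_{\mathbb{H}^r(\Omega)}$ uniformly in $t$, and hence $\| G_\gamma(\cdot)g \|_{L^2(0,T;\mathbb{H}^r(\Omega))}\lesssim \| g \|_{\mathbb{H}^r(\Omega)}$. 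For the velocity datum $h$, writing $\| H_\gamma(t)h \|_{\mathbb{H}^r(\Omega)}^2 = \sum_k \lambda_k^r h_k^2\, t^2 E_{\gamma,2}(-\lambda_k^s t^\gamma)^2$ and invoking \eqref{ML_estimate}, the elementary scaling estimate $\int_0^T t^2 (1+\lambda_k^s t^\gamma)^{-2}\diff t \lesssim \lambda_k^{-s}$ (verified by the substitution $\tau = \lambda_k^{s/\gamma}t$, separating the cases $\gamma \gtrless 3/2$) yields $\| H_\gamma(\cdot)h \|_{L^2(0,T;\mathbb{H}^r(\Omega))}\lesssim \| h \|_{\mathbb{H}^{r-s}(\Omega)}$. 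This accounts for the $g$-- and $h$--contributions in all three target estimates.

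The delicate term, and the main obstacle, is the forcing contribution $\int_0^t F_\gamma(t-r)f(\cdot,r)\diff r$. In the spectral basis its $\mathbb{H}^r(\Omega)$--norm is governed by the convolutions $\zeta_k\star f_k$ with kernel $\zeta_k(\tau)=\tau^{\gamma-1}E_{\gamma,\gamma}(-\lambda_k^s\tau^\gamma)$, so by Young's inequality (Lemma~\ref{le:continuity}) one needs the $L^1(0,T)$--size of $\zeta_k$. Here \eqref{ML_estimate} together with the change of variables $u=\lambda_k^s\tau^\gamma$ gives $\| \zeta_k \|_{L^1(0,T)}\lesssim \lambda_k^{-s}\log(1+\lambda_k^s T^\gamma)$, so that the forcing term is controlled by $\sum_k \lambda_k^{r-2s}\log^2(1+\lambda_k^s T^\gamma)\| f_k \|_{L^2(0,T)}^2$. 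The factor $\lambda_k^{-2s}$ is the expected smoothing of two orders, but the logarithm cannot be absorbed without loss; the crucial point is the elementary inequality $\log^2(1+\lambda_k^s T^\gamma)\lesssim_{\mu,T}\lambda_k^{2\mu s}$, valid for every $\mu>0$, which converts the logarithm into an arbitrarily small power and yields $\big\| \int_0^{\cdot}F_\gamma(\cdot-r)f(r)\diff r \big\|_{L^2(0,T;\mathbb{H}^r(\Omega))}\lesssim \| f \|_{L^2(0,T;\mathbb{H}^{r-2s+2\mu s}(\Omega))}$. This is precisely the source of the arbitrary parameter $0<\mu\ll1$ and of the $2\mu s$ regularity loss on $f$ in the statement. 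Collecting the three operator bounds and specializing $r$ completes the proof; the only care needed is to keep all constants independent of $\ell$, which is immediate since the $\ell$--dependence is entirely carried by the prefactors inherited from Theorem~\ref{thm:pointwisebounds}.
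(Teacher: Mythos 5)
Your proposal is correct and follows essentially the same route as the paper: reduce to the pointwise bounds of Theorem~\ref{thm:pointwisebounds}, then estimate $\|u_k\|_{L^2(0,T)}$ term by term from \eqref{eq:u_k_rs}, getting $O(1)$ for the $g$--term, $O(\lambda_k^{-s/2})$ for the $h$--term, and $\lambda_k^{-s}\log(1+\lambda_k^s T^\gamma)\lesssim\lambda_k^{-s+\mu s}$ for the convolution kernel via Young's inequality. The only cosmetic difference is in the $h$--term, where the paper bounds $t^2E_{\gamma,2}^2\lesssim\lambda_k^{-s}t^{2-\gamma}$ pointwise before integrating rather than rescaling the integral, but both give the same $\lambda_k^{-s}$ gain.
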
 
\begin{proof}
Estimates \eqref{eq:partial_y_l+1}--\eqref{eq:deltaxpartial_y_l+1} reveal that it suffices to bound $\| u_k\|_{L^2(0,T)}$. To obtain such a bound, we recall that formula \eqref{eq:u_k_rs} reads
\[
 u_k(t)= E_{\gamma,1} (-\lambda_{k}^s t^{\gamma}) g_k + t E_{\gamma,2} (-\lambda_{k}^s t^{\gamma}) h_k + \int_{0}^t (t-r)^{\gamma-1}E_{\gamma,\gamma}(-\lambda_k^s(t-r)^{\gamma})f_k(r)\diff r.
\]
The control of the first and second terms, on the right--hand side of the previous expression, follow from the estimate \eqref{ML_estimate}. In fact, we have that
\begin{equation}
\label{eq:first_term}
\| E_{\gamma,1} (-\lambda_{k}^s t^{\gamma}) g_k \|^2_{L^2(0,T)} \lesssim T g_k^2, \quad k \in \mathbb{N}
\end{equation}
and that
\begin{equation}
\label{eq:second_term}
  \begin{aligned}
\|  t E_{\gamma,2} (-\lambda_{k}^s t^{\gamma}) h_k \|^2_{L^2(0,T)} &\lesssim \lambda_k^{-s} h_k^2 \int_0^T t^{2-\gamma} \frac{ \lambda_k^s t^\gamma}{(1+\lambda_k^s t^\gamma)^2} \diff t \\
&\lesssim \lambda_k^{-s} h_k^2\int_0^T t^{2-\gamma} \diff t \lesssim T^{3-\gamma} \lambda_k^{-s }h_k^2, \quad k \in \mathbb{N}.
  \end{aligned}
\end{equation}
To control the third term, we invoke the estimate of Lemma \ref{le:continuity} and conclude that
\begin{multline*}
\left\| \int_{0}^t (t-r)^{\gamma-1}E_{\gamma,\gamma}(-\lambda_k^s(t-r)^{\gamma})f_k(r)\diff r \right\|_{L^2(0,T)} \leq \\ \|r^{\gamma-1}E_{\gamma,\gamma}( -\lambda_k^s r^{\gamma} ) \|_{L^1(0,T)} \| f_k \|_{L^2(0,T)}.
\end{multline*}
It suffices to bound $\|r^{\gamma-1}E_{\gamma,\gamma}( -\lambda_k^s r^{\gamma} ) \|_{L^1(0,T)}$. To accomplish this task, we utilize, again, the estimate \eqref{ML_estimate}. This yields 
\begin{equation}
\label{eq:log-term}
\begin{aligned}
  \|r^{\gamma-1}E_{\gamma,\gamma}( -\lambda_k^s r^{\gamma} ) \|_{L^1(0,T)} &\lesssim \int_0^T \frac{r^{\gamma-1}}{1+ \lambda_k^s r^\gamma} \diff r
  = \gamma^{-1}\lambda_k^{-s}\int_0^{\lambda_k^s T^\gamma} \frac{\diff \xi}{1+\xi} \\ &= \gamma^{-1}\lambda_k^{-s}\log (1+\lambda_k^s T^{\gamma}).
\end{aligned}
\end{equation}
A collection of the derived estimates \eqref{eq:first_term}--\eqref{eq:log-term} reveals that
\begin{equation}
\label{eq:uk_L2}
 \| u_k \|^2_{L^2(0,T)} \lesssim T g_k^2 + T^{3-\gamma} \lambda_k^{-s} h_k^2 + \lambda_k^{-2s}\log (1+\lambda_k^s T^{\gamma})^2 \| f_k\|_{L^2(0,T)}^2.
\end{equation}
Consequently, on the basis of \eqref{eq:partial_y_l+1}, the previous estimate \eqref{eq:uk_L2} yields
\begin{multline*}
\int_0^T \|\partial_{y}^{\ell+1} \ue(\cdot,t) \|^2_{L^2(\omega_{\alpha + 2 \ell -2\sigma,\theta},\C)} \diff t \lesssim (\ell+1)!^2 \kappa^{2(\ell+1)}\bigg( T \| g \|^2_{\mathbb{H}^{\sigma+s}(\Omega)} \\
+ T^{3-\gamma} \| h \|^2_{\mathbb{H}^{\sigma}(\Omega)} + T^{2 \mu \gamma} \| f\|^2_{L^2(0,T;\mathbb{H}^{\sigma - s + 2 \mu s}(\Omega))}\bigg),
\end{multline*}
where we have used that $\log (1+z)\lesssim z^\mu$ for all $z\ge 0$ and $\mu>0$. This yields \eqref{eq:reg_in_y_gamma_1}. The estimates \eqref{eq:reg_in_y_gamma_2} and \eqref{eq:reg_in_y_gamma_3} follow, on the basis of \eqref{eq:nablaxpartial_y_l+1} and \eqref{eq:deltaxpartial_y_l+1}, respectively, by using similar arguments. 
\end{proof}

We conclude by studying the space regularity when $\gamma = 2$.

\begin{corollary}[space regularity for $\gamma = 2$]
\label{TH:regularitygamma2}
Let $\ue$ solve \eqref{eq:wave_alpha_extension}--\eqref{eq:initial_cond} for $s\in(0,1)$ and $\gamma = 2$. Let $0 \leq \sigma < s$ and $0 \leq \nu < 1+s $. Then, there exists $\kappa > 1$ such that the following regularity estimates hold: 
\begin{multline}
\| \partial_{y}^{\ell +1 } \ue \|_{L^2(0,T;L^2(\omega_{\alpha + 2 \ell -2\sigma,\theta},\C))}^2  
\lesssim  \kappa^{2(\ell+1)}(\ell+1)!^2 \big(
\| g \|^2_{\mathbb{H}^{\sigma + s}(\Omega)} + 
\| h \|^2_{\mathbb{H}^{\sigma}(\Omega)} 
\\
+ 
\| f \|_{L^2(0,T;\mathbb{H}^{\sigma}(\Omega))}^2 \big),
\label{eq:reg_in_y_gamma_1_gammaeq2} 
\end{multline} 
\begin{multline}
\| \nabla_{x'} \partial_{y}^{\ell +1 } \ue \|_{L^2(0,T;L^2(\omega_{\alpha + 2 (\ell+1) -2\nu,\theta},\C))}^2  
\lesssim  \kappa^{2(\ell+1)}(\ell+1)!^2 \big(
\| g \|^2_{\mathbb{H}^{\nu + s}(\Omega)} + 
\| h \|^2_{\mathbb{H}^{\nu}(\Omega)} 
\\
+ 
\| f \|_{L^2(0,T;\mathbb{H}^{\nu }(\Omega))}^2 \big).
\label{eq:reg_in_y_gamma_2_gammaeq2} 
\end{multline} 
and
\begin{multline}
\| \mathcal{L}_{x'} \partial_{y}^{\ell +1 } \ue \|_{L^2(0,T;L^2(\omega_{\alpha + 2 (\ell+1) -2\nu,\theta},\C))}^2  
\lesssim  \kappa^{2(\ell+1)}(\ell+1)!^2 \big(
\| g \|^2_{\mathbb{H}^{1+\nu + s}(\Omega)} \\ + 
\| h \|^2_{\mathbb{H}^{1+\nu}(\Omega)} 
+ 
\| f \|_{L^2(0,T;\mathbb{H}^{1+\nu }(\Omega))}^2 \big).
\label{eq:reg_in_y_gamma_3_gammaeq2} 
\end{multline} 
\end{corollary}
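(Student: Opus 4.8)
The plan is to mirror the proof of Corollary~\ref{TH:regularity}, the only essential change being that for $\gamma = 2$ we no longer use the Mittag--Leffler representation \eqref{eq:u_k_rs}, but rather the trigonometric representation \eqref{eq:var_parameters}. Since the pointwise bounds of Theorem~\ref{thm:pointwisebounds} are valid for $\gamma \in (1,2]$ and their right--hand sides involve only $\| u(\cdot,t)\|_{\mathbb{H}^r(\Omega)}^2 = \sum_k \lambda_k^r u_k(t)^2$, the problem reduces, exactly as before, to bounding $\| u_k\|_{L^2(0,T)}$ and then summing against the power of $\lambda_k$ dictated by \eqref{eq:partial_y_l+1}, \eqref{eq:nablaxpartial_y_l+1} and \eqref{eq:deltaxpartial_y_l+1}.

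First I would estimate the three contributions to $u_k$ in \eqref{eq:var_parameters} separately. Using $|\cos|\le 1$ and $|\sin|\le 1$, the first two terms satisfy $\| g_k \cos(\lambda_k^{s/2}t)\|_{L^2(0,T)}^2 \lesssim T\, g_k^2$ and $\| \lambda_k^{-s/2} h_k \sin(\lambda_k^{s/2}t)\|_{L^2(0,T)}^2 \lesssim T\, \lambda_k^{-s} h_k^2$. For the third, convolution, term I would apply Lemma~\ref{le:continuity} with kernel $\phi(t) = \lambda_k^{-s/2}\sin(\lambda_k^{s/2}t)$; since $\| \phi\|_{L^1(0,T)} \le T \lambda_k^{-s/2}$, Young's inequality for convolutions yields a bound $\lesssim T^2 \lambda_k^{-s}\| f_k\|_{L^2(0,T)}^2$. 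Collecting these estimates produces
\[
 \| u_k\|_{L^2(0,T)}^2 \lesssim T\, g_k^2 + T\,\lambda_k^{-s} h_k^2 + T^2\, \lambda_k^{-s}\| f_k\|_{L^2(0,T)}^2, \qquad k \in \mathbb{N}.
\]

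Finally I would substitute this bound into the three pointwise estimates and sum in $k$. For \eqref{eq:reg_in_y_gamma_1_gammaeq2}, integrating \eqref{eq:partial_y_l+1} in time and using the previous display gives $\sum_k \lambda_k^{\sigma+s}\| u_k\|_{L^2(0,T)}^2 \lesssim T\| g\|_{\mathbb{H}^{\sigma+s}(\Omega)}^2 + T\| h\|_{\mathbb{H}^{\sigma}(\Omega)}^2 + T^2 \| f\|_{L^2(0,T;\mathbb{H}^{\sigma}(\Omega))}^2$, which is precisely the asserted estimate; the remaining two inequalities follow identically from \eqref{eq:nablaxpartial_y_l+1} and \eqref{eq:deltaxpartial_y_l+1} with $\nu$ in place of $\sigma$ and the Sobolev index raised by one accordingly. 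There is in fact no serious obstacle here: the only ingredient genuinely different from the fractional case is the convolution bound for the sine kernel, and the only care required is in tracking the powers of $\lambda_k$ so that the forcing contribution lands in the stated Sobolev space. It is worth emphasising that, in contrast with the $\gamma \in (1,2)$ case, the sine kernel is clean enough that its $L^1$ norm scales as $\lambda_k^{-s/2}$ with no logarithmic loss; consequently the auxiliary parameter $\mu$ is not needed and the forcing enters directly through $L^2(0,T;\mathbb{H}^{\sigma}(\Omega))$.
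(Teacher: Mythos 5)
Your proposal is correct and follows exactly the paper's argument: reduce to bounding $\| u_k \|_{L^2(0,T)}$ via Theorem~\ref{thm:pointwisebounds}, use the representation \eqref{eq:var_parameters} to get $\| u_k\|_{L^2(0,T)}^2 \lesssim T g_k^2 + T\lambda_k^{-s}h_k^2 + T^2\lambda_k^{-s}\| f_k\|_{L^2(0,T)}^2$, and sum. You in fact supply more detail than the paper (which states the $u_k$ bound without derivation), and your observation that no logarithmic loss and hence no parameter $\mu$ is needed for the sine kernel is accurate.
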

\begin{proof}
Once again, in light of Theorem~\ref{thm:pointwisebounds}, it suffices to bound $\| u_k \|_{L^2(0,T)}$. In this case ($\gamma = 2$) using \eqref{eq:var_parameters} we have that
\[
  \| u_k \|_{L^2(0,T)}^2 \lesssim T g_k^2 + T \lambda_k^{-s} h_k^2 + T^2 \lambda_k^{-s} \| f_k \|_{L^2(0,T)}^2.
\]
This estimate, together with \eqref{eq:partial_y_l+1}--\eqref{eq:deltaxpartial_y_l+1} yield the claimed bounds.
\end{proof}

\subsection{Space--time regularity}
\label{sub:spacetime}

The techniques and ideas used to obtain temporal and spatial regularity can be combined in order to obtain mixed regularity results, \ie we measure time derivatives in Sobolev norms of higher order. For brevity, we only present the results for $\gamma \in (1,2)$, but these can be extended to $\gamma = 2$ as well. The following regularity estimates are obtained by a direct combination of Theorem~\ref{thm:time_regularity} and Corollary~\ref{TH:regularity}.

\begin{corollary}[space--time regularity]
\label{cor:spacetimeregularity}
Let $\ue$ solve \eqref{eq:wave_alpha_extension}--\eqref{eq:initial_cond} for $s\in(0,1)$ and $\gamma \in (1,2)$. Let $0 \leq \sigma < s$, $0 \leq \nu < 1+s $ and $\rho >5-2\gamma$. Then, there exists $\kappa > 1$ such that the following regularity estimates hold: 
\begin{multline*}
\|t^{\rho/2} \partial_t^3 \partial_{y}^{\ell +1 } \ue \|_{L^2(0,T;L^2(\omega_{\alpha + 2 \ell -2\sigma,\theta},\C))}^2  
\lesssim  \kappa^{2(\ell+1)}(\ell+1)!^2 
\big(
\| g \|^2_{\mathbb{H}^{\sigma + 3s}(\Omega)} + 
\| h \|^2_{\mathbb{H}^{\sigma+2s}(\Omega)} 
\\
+ 
\| f \|_{H^2(0,T;\mathbb{H}^{\sigma+s}(\Omega))}^2 \big),
\end{multline*} 
\begin{multline*}
\|t^{\rho/2} \partial_t^3 \nabla_{x'} \partial_{y}^{\ell +1 } \ue \|_{L^2(0,T;L^2(\omega_{\alpha + 2 (\ell+1) -2\nu,\theta},\C))}^2  
\lesssim  \kappa^{2(\ell+1)}(\ell+1)!^2 
\big(
\| g \|^2_{\mathbb{H}^{\nu + 3s}(\Omega)} \\ + 
\| h \|^2_{\mathbb{H}^{\nu+2s}(\Omega)} 
+ 
\| f \|_{H^2(0,T;\mathbb{H}^{\nu + s}(\Omega))}^2 \big).
\end{multline*} 
and
\begin{multline*}
\|t^{\rho/2} \partial_t^3 \mathcal{L}_{x'} \partial_{y}^{\ell +1 } \ue \|_{L^2(0,T;L^2(\omega_{\alpha + 2 (\ell+1) -2\nu,\theta},\C))}^2  
\lesssim  \kappa^{2(\ell+1)}(\ell+1)!^2 
\big(
\| g \|^2_{\mathbb{H}^{1+\nu + 3s}(\Omega)} \\ + 
\| h \|^2_{\mathbb{H}^{1+\nu+2s}(\Omega)} 
+ 
\| f \|_{H^2(0,T;\mathbb{H}^{1+\nu+s}(\Omega))}^2 \big).
\end{multline*} 
\end{corollary}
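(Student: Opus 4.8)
The plan is to exploit the separation-of-variables structure of the representation formula \eqref{eq:exactforms}, $\ue(x,t) = \sum_{k\geq1} u_k(t)\varphi_k(x')\psi_k(y)$, so that the temporal operator $\partial_t^3$ acts solely on the coefficients $u_k(t)$ while $\partial_y^{\ell+1}$ acts solely on the profiles $\psi_k(y)$. Since these two differentiations commute and touch disjoint factors, the spatial machinery of Theorem~\ref{thm:pointwisebounds} and the temporal machinery of Theorem~\ref{thm:time_regularity} decouple and can be applied in sequence.

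First I would treat the spatial direction. Differentiating \eqref{eq:exactforms} termwise gives $\partial_t^3\partial_y^{\ell+1}\ue = \sum_{k\geq1}\partial_t^3 u_k(t)\,\varphi_k(x')\,\partial_y^{\ell+1}\psi_k(y)$. Because $\{\varphi_k\}$ is orthonormal in $L^2(\Omega)$, the weighted norm separates exactly as in the proof of \eqref{eq:partial_y_l+1}, the only change being that $u_k^2(t)$ is replaced by $(\partial_t^3 u_k(t))^2$. Repeating the change of variables $z=\sqrt{\lambda_k}\,y$ and the bound \eqref{eq:bound_on_Psi} on $\Psi_{\ell+1}$ then yields, for a.e.\ $t$,
\[
  \|\partial_t^3\partial_y^{\ell+1}\ue(\cdot,t)\|_{L^2(\omega_{\alpha+2\ell-2\sigma,\theta},\C)}^2 \lesssim \kappa^{2(\ell+1)}((\ell+1)!)^2\,\|\partial_t^3 u(\cdot,t)\|_{\mathbb{H}^{\sigma+s}(\Omega)}^2,
\]
and, using \eqref{eq:nablaxpartial_y_l+1} and \eqref{eq:deltaxpartial_y_l+1} in place of \eqref{eq:partial_y_l+1}, the analogous bounds with right-hand norms $\|\partial_t^3 u(\cdot,t)\|_{\mathbb{H}^{\nu+s}(\Omega)}^2$ and $\|\partial_t^3 u(\cdot,t)\|_{\mathbb{H}^{1+\nu+s}(\Omega)}^2$. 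Multiplying by $t^\rho$ and integrating over $(0,T)$ reduces all three claimed estimates to bounding $\|t^{\rho/2}\partial_t^3 u\|_{L^2(0,T;\mathbb{H}^r(\Omega))}$ at the levels $r\in\{\sigma+s,\ \nu+s,\ 1+\nu+s\}$.

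It remains to supply the temporal estimate at a general level $r$. The key observation is that the proof of Theorem~\ref{thm:time_regularity} never uses $r=-s$ specifically: Lemma~\ref{lemma:G_gamma} is stated for arbitrary $r\in\R$, and both the Mittag--Leffler bound \eqref{ML_estimate} and the convolution Lemma~\ref{le:continuity} are insensitive to the spatial index. Re-running the three-step superposition argument with target space $\mathbb{H}^r(\Omega)$ therefore gives
\[
  \|t^{\rho/2}\partial_t^3 u\|_{L^2(0,T;\mathbb{H}^r(\Omega))} \lesssim \|g\|_{\mathbb{H}^{r+2s}(\Omega)} + \|h\|_{\mathbb{H}^{r+s}(\Omega)} + \|f\|_{H^2(0,T;\mathbb{H}^r(\Omega))}
\]
for every $\rho>5-2\gamma$: Step~1 invokes \eqref{eq:G_gamma} with $q=3$ to produce the $\mathbb{H}^{r+2s}$ norm of $g$, Step~2 invokes \eqref{eq:H_gamma} with $q=2$ to produce the $\mathbb{H}^{r+s}$ norm of $h$, and Step~3 uses formula \eqref{eq:3rd_derivative}, the embedding $H^2(0,T;\mathbb{H}^r)\hookrightarrow C^1([0,T];\mathbb{H}^r)$ to control the traces $f_k(0)$ and $\partial_t f_k(0)$, and Lemma~\ref{le:continuity} for the convolution term. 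Specializing to $r=\sigma+s$, $r=\nu+s$ and $r=1+\nu+s$ and inserting these into the pointwise spatial estimates above produces the three claimed inequalities.

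The main obstacle, though largely bookkeeping, is the faithful transcription of Step~3 to a general level $r$: one must check that each of the three summands in \eqref{eq:3rd_derivative}, after weighting by $\lambda_k^r$ and by $t^\rho$, is integrable in $t$ and summable in $k$ precisely under the hypothesis $\rho>5-2\gamma$. The first summand is binding, since its $t^{\gamma-3}$ singularity forces $\rho>5-2\gamma$, whereas the second and the convolution summands are integrable under the weaker conditions $\rho>3-2\gamma$ and $\rho>0$. This is exactly why the threshold inherited from the purely temporal theorem is left unchanged.
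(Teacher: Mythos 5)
Your proposal is correct and follows essentially the same route as the paper: termwise differentiation of the representation \eqref{eq:exactforms} so that the pointwise spatial bounds of Theorem~\ref{thm:pointwisebounds} apply with $u_k$ replaced by $\partial_t^3 u_k$, followed by the superposition/temporal argument of Theorem~\ref{thm:time_regularity} (via Lemma~\ref{lemma:G_gamma} and the Step~3 convolution estimate) run at the shifted Sobolev levels $r\in\{\sigma+s,\nu+s,1+\nu+s\}$. Your explicit check that the $g$-term is the binding one for the threshold $\rho>5-2\gamma$ is a correct, slightly more detailed version of what the paper leaves implicit.
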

\begin{proof}
We proceed first as in the proof of Theorem~\ref{thm:pointwisebounds} to obtain that
\[
  \|t^{\rho/2} \partial_t^3 \partial_{y}^{\ell +1 } \ue (\cdot,t) \|_{L^2(\omega_{\alpha + 2 \ell -2\sigma,\theta},\C))}^2 \lesssim (\ell+1)!^2 \kappa^{2(\ell+1)}
  t^\rho \sum_{k=1}^\infty \lambda_k^{\sigma +s} \left( \frac{\diff^3}{\diff t^3} u_k(t) \right)^2.
\]
In the case $f\equiv0$ the estimates of Lemma~\ref{lemma:G_gamma} with $r= \sigma +s$ then yield that
\[
  \int_0^T t^\rho \left\| \frac{\diff^3}{\diff t^3} u_k(t) \right\|_{\mathbb{H}^{\sigma+s}(\Omega)}^2 \diff t \lesssim \| g \|_{\mathbb{H}^{\sigma+3s}(\Omega)}^2 + \| h \|_{\mathbb{H}^{\sigma+2s}(\Omega)}^2.
\]
The case $g \equiv 0$, $h \equiv 0$ can be obtained as in the proof of Theorem~\ref{thm:time_regularity}, Step 3, to obtain
\[
  \int_0^T t^\rho \left\| \frac{\diff^3}{\diff t^3} u_k(t) \right\|_{\mathbb{H}^{\sigma+s}(\Omega)}^2 \diff t \lesssim \| f \|_{H^2(0,T;\mathbb{H}^{\sigma+s}(\Omega))}^2.
\]

The other estimates follow the same line of reasoning.
\end{proof}

\subsection{Application: error estimates for fully discrete schemes}
\label{sub:application}

Our main motivation to study the regularity of the solution to \eqref{eq:fractional_wave} and \eqref{eq:wave_alpha_extension}--\eqref{eq:initial_cond} is to provide error estimates for numerical methods. Here we sketch how the estimates we obtained in previous sections fit into this program. 

We begin by introducing a family of finite dimensional spaces $\V_h \subset \HL(y^\alpha, \C)$, parametrized by $h>0$. We also introduce
\[
 \mathcal{T} =\{t_j\}_{j=0}^J, \quad 0 = t_0 < t_1 < \ldots < t_J = T,
\]
a partition of the time interval $[0,T]$. We denote $\tau = \max\{t_{j+1} - t_j: j = 0, \ldots, J-1 \}$. 

A fully discrete scheme then seeks for $\ue_h^\tau = \{ \ue_h^j \}_{j=0}^J \subset \V_h$ such that, for every $j \geq 2$,
\begin{equation}
\label{eq:weak_wave_full_discr}
  \langle \tr \delta_\tau^\gamma \ue_h^j, \tr V \rangle + a( \ue_h^j, V) = \langle f, \tr V \rangle, \quad \forall V \in \V_h.
\end{equation}
In addition, we require the $\ue_h^0$ and $\ue_h^1$ are determined by (suitable approximations of) the initial data $g$ and $h$.

In \eqref{eq:weak_wave_full_discr} we introduced the mapping $\delta_\tau^\gamma$ that acts on (time) sequences. We assume that $\delta_\tau^\gamma$ is \emph{consistent} with $\partial_t^\gamma$ in the sense that there is $\Wcal \subset L^2(0,T;\Hsd)$ such that, if $w \in \Wcal$ then
\begin{equation}
\label{eq:consistent}
  \| \delta_\tau^\gamma w - \partial_t^\gamma w \|_{L^2(0,T;\Hsd)} \lesssim \mathcal{E}_t(\tau) \| w \|_\Wcal,
\end{equation}
for some function $\mathcal{E}_t: \R_+ \to \R_+$ such that $\mathcal{E}_t(\tau) \downarrow 0$ as $\tau \downarrow 0$. Notice that Theorem~\ref{thm:time_regularity} provides particular instances of $\Wcal$. We, in addition, need to assume the following bound
\begin{equation}
\label{eq:weirdbound}
  \| \delta_\tau^\gamma w \|_{L^2(0,T;\Hsd)} \lesssim \| \partial_t^k w \|_{L^2(0,T;\Hsd)},
\end{equation}
for $k=2$ or $3$.

We also assume that the scheme \eqref{eq:weak_wave_full_discr} is \emph{stable}, \ie that its solution satisfies
\begin{equation}
\label{eq:stable}
  \| \ue_h^\tau \|_{L^2(0,T;\HLn(y^\alpha,\C))} \lesssim \| f \|_{L^2(0,T;\Hsd)}.
\end{equation}

The final ingredient necessary for the analysis of \eqref{eq:weak_wave_full_discr} is the so-called \emph{Galerkin projection} $\calG_h$: 
\[
 \calG_h: \HL(y^\alpha,\C) \to \V_h, \qquad a(\calG_h w, V) = a(w,V), \quad \forall V \in \V_h.
\]
This immediately gives its stability and the fact that it has quasi-best approximation properties. From this, by a proper choice of $\V_h$, it follows that there is $\Zcal \subset \HL(y^\alpha,\C)$ such that if $w \in \Zcal$ we have
\begin{equation}
\label{eq:Galprojestimate}
 \| w - \calG_h w \|_{\HLn(y^\alpha,\C)} \lesssim \mathcal{E}_x(h) \| w \|_\Zcal,
\end{equation}
for a function $\mathcal{E}_x : \R_+ \to \R_+$ such that $\mathcal{E}_x(h) \downarrow 0$ as $h \downarrow 0$. Notice that \eqref{eq:Galprojestimate} can be integrated in time to obtain error estimates in $L^2(0,T;\HL(y^\alpha,\C))$. In this case, Corollary~\ref{TH:regularity} provides particular instances of $L^2(0,T;\Zcal)$.

\begin{remark}[choice of $\V_h$ and $\delta_\tau^\gamma$]
\label{rem:numerics}
The choice of discrete space $\V_h$ and operator $\delta_\tau^\gamma$ satisfying the requisite properties is by no means trivial and is at the heart of the numerical analysis of \eqref{eq:weak_wave}. Our sole purpose here is to show how the regularity estimates that we have provided can be used.
\end{remark}

With all these tools at hand, we can obtain an error analysis for \eqref{eq:weak_wave_full_discr} as follows. Define the error 
\[
 e = \ue - \ue_h^\tau = (\ue - \calG_h \ue) + (\calG_h \ue - \ue_h^\tau) = E_\ue + e_h,
\]
where $E_\ue$ is the so-called interpolation error and $e_h$ the approximation error. From \eqref{eq:Galprojestimate} it immediately follows that
\[
  \| E_\ue \|_{L^2(0,T;\HLn(y^\alpha,\C))} \lesssim \mathcal{E}_x(h) \| \ue \|_{L^2(0,T;\Zcal)},
\]
so that it suffices to bound $e_h$. Set, in \eqref{eq:weak_wave} $v = V \in \V_h$ and subtract from it \eqref{eq:weak_wave_full_discr}. This yields
\[
  \langle \tr \delta_\tau^\gamma e_h^j, \tr V \rangle + a( e_h^j, V) = \langle \tr (\delta_\tau^\gamma \calG_h \ue - \partial_t^\gamma \ue), \tr V \rangle \quad \forall V \in \V_h.
\]
The stability of the scheme, expressed in \eqref{eq:stable}, then yields
\begin{multline*}
  \| e_h \|_{L^2(0,T;\HLn(y^\alpha,\C))} \lesssim \| \tr (\delta_\tau^\gamma \calG_h \ue - \partial_t^\gamma \ue) \|_{L^2(0,T;\Hsd)} \\ \leq
    \| \tr (\delta_\tau^\gamma - \partial_t^\gamma) \ue \|_{L^2(0,T;\Hsd)} + \| \delta_\tau^\gamma (I-\calG_h ) \ue\|_{L^2(0,T;\Hsd)}.
\end{multline*}
The consistency, expressed in \eqref{eq:consistent} allows us to bound
\[
  \| \tr (\delta_\tau^\gamma - \partial_t^\gamma) \ue \|_{L^2(0,T;\Hsd)} \lesssim \mathcal{E}_t(\tau) \| \ue \|_\Wcal.
\]
Finally, the bound \eqref{eq:weirdbound} and the approximation properties of $\calG_h$, given in \eqref{eq:Galprojestimate}, yield
\[
  \| \delta_\tau^\gamma (I-\calG_h ) \ue\|_{L^2(0,T;\Hsd)} \lesssim \| (I-\calG_h ) \partial_t^k \ue \|_{L^2(0,T;\Hsd)} \lesssim \mathcal{E}_x(h) \| \partial_t^k \ue \|_{L^2(0,T;\Zcal)}.
\]
Notice that Corollary~\ref{cor:spacetimeregularity} gives conditions so that $\| \partial_t^k \ue \|_{L^2(0,T;\Zcal)} < \infty$.

Conclude by gathering all the estimates given above.

\bibliographystyle{plain}
\bibliography{biblio}

\end{document}